\newtheorem{theorem}{Theorem}[section]
\newtheorem{lemma}[theorem]{Lemma}
\newtheorem{proposition}[theorem]{Proposition}
\newtheorem{corollary}[theorem]{Corollary}
\newtheorem{mainthm}{Theorem}
\theoremstyle{definition}
\newtheorem{definition}[theorem]{Definition}
\newtheorem{notation}[theorem]{Notation}
\newtheorem{example}[theorem]{Example}
\theoremstyle{remark}
\newtheorem{remark}[theorem]{Remark}
\numberwithin{equation}{section}
\newcommand{\C}{\mathbb{C}}
\newcommand{\Sp}{\mathbb{S}}
\newcommand{\R}{\mathbb{R}}
\newcommand{\Z}{\mathbb{Z}}
\newcommand{\SU}{\operatorname{SU}}
\newcommand{\bs}{\boldsymbol}
\newcommand{\Hilb}{\operatorname{Hilb}}
\begin{document}

\title[Ratios of Schur polynomials and symplectic quotients]{The partial derivative of ratios of Schur polynomials and applications to symplectic quotients}

\author[H.-C.~Herbig]{Hans-Christian Herbig\,\orcidlink{0000-0003-2676-3340}}
\address{Departamento de Matem\'{a}tica Aplicada, Universidade Federal do Rio de Janeiro,
Av. Athos da Silveira Ramos 149, Centro de Tecnologia - Bloco C, CEP: 21941-909 - Rio de Janeiro, Brazil}

\author[D.~Herden]{Daniel Herden}
\address{Department of Mathematics,
Baylor University,
Sid Richardson Building,
1410 S.4th Street,
Waco, TX 76706, USA}
\email{daniel\_herden@baylor.edu}

\author[H.~Kolehmainen]{Harper Kolehmainen}
\address{Department of Mathematics and Statistics,
Rhodes College,
2000 N. Parkway,
Memphis, TN, 38112, USA}
\email{harperkole@gmail.com}

\author[C.~Seaton]{Christopher Seaton\,\orcidlink{0000-0001-8545-2599}}
\address{Department of Mathematics and Statistics,
Skidmore College,
815 North Broadway,
Saratoga Springs, NY 12866, USA}
\email{cseaton@skidmore.edu}

\keywords{Schur polynomial, semistandard Young tableau, ratio of Schur polynomials, symplectic quotient}
\subjclass[2020]{Primary 05E05; Secondary 05A20, 53D20}

\dedicatory{Dedicated to the family of Hans-Christian Herbig in his memory.}

\thanks{
D.H. was supported by Simons Foundation grant MPS-TSM-00007788.
H.K. was supported by the Rhodes College Summer Fellowship Program.
C.S. was supported by an AMS-Simons Research Enhancement Grant for PUI Faculty.
}


\begin{abstract}
We show that a ratio of Schur polynomials $s_{\lambda}/s_{\rho}$ associated to partitions $\lambda$ and $\rho$ such that $\lambda\subsetneq\rho$ has a negative partial derivative at any point where all variables are positive. This is accomplished by establishing an injective map between sets of pairs of skew semistandard Young tableaux that preserves the product of the corresponding monomials. We use this result and the description of the first Laurent coefficient of the Hilbert series of the graded algebra of regular functions on a linear symplectic quotient by the circle to demonstrate that many such symplectic quotients are not graded regularly diffeomorphic. In addition, we give an upper bound for this Laurent coefficient in terms of the largest two weights of the circle representation and demonstrate that all but finitely many circle symplectic quotients of each dimension are not graded regularly diffeomorphic to linear symplectic quotients by $\operatorname{SU}_2$.
\end{abstract}

\maketitle

\tableofcontents


\section{Introduction}
\label{sec:intro}

Schur polynomials are an important class of symmetric polynomials indexed by integer partitions that arise in many contexts in representation theory
and combinatorics. The Schur polynomials admit several definitions, including a combinatorial description in which the terms of a Schur polynomial enumerate
semistandard Young tableaux (SSYT) of a given shape. Recall that if $\lambda$ is a partition, a SSYT of shape $\lambda$ is a Young diagram of shape
$\lambda$ whose boxes are labeled in such a way that the columns are increasing and the rows are non-decreasing.
See \cite[Chapter~7]{StanleyEC2}, \cite[Chapter~4]{Sagan}, or \cite[Chapter~1]{Macdonald} for background on Schur polynomials.

The first, second, and fourth authors have used Schur polynomials to describe the first Laurent coefficient of the Hilbert series of the graded algebra of
regular functions on a linear symplectic quotient by the circle $\Sp^1$ \cite{HerbigSeatonHSeries} or special unitary group $\SU_2$ \cite{HerbigHerdenSeatonSU2}.
In particular, the first Laurent coefficient in the case of symplectic quotients by $\Sp^1$ is given by a ratio of two Schur polynomials
where the diagram associated to the numerator is formed from that associated to the denominator by removing a single box.
This coefficient has been used to distinguish between symplectic quotients up to any equivalence that preserves the grading of the algebra
of regular functions. An experimental examination of this coefficient has indicated that it is a decreasing function of the weights.

The main goal of this paper is to establish this fact in the general setting of a ratio of Schur polynomials such that the numerator is associated
to a partition $\lambda$ and the denominator is associated to a partition $\rho$ with $\lambda\subsetneq\rho$. Our main result is the following.

\begin{mainthm}
\label{mainthm:PartDerDec}
Let $N \geq n \geq 1$ be fixed integers.
Let $\bs{x} = (x_1,x_2,\ldots,x_N)$ be a set of variables, and let $\lambda = (\lambda_1,\lambda_2,\ldots,\lambda_n)$
and $\rho = (\rho_1,\rho_2,\ldots,\rho_n)$ be partitions such that $\lambda_i \leq \rho_i$ for each $i$ and $\lambda_i < \rho_i$
for at least one $i$. Then for positive values of the $x_i$, $s_{\lambda}(\bs{x})/s_{\rho}(\bs{x})$ is strictly decreasing in
each variable. More strongly, for each $j$, the partial derivative
\begin{equation}
\label{eq:MainPartDeriv}
    \frac{\partial}{\partial x_j}\left(\frac{s_{\lambda}(\bs{x})}{s_{\rho}(\bs{x})}\right)
\end{equation}
can be expressed as the ratio of polynomials where the coefficients of the denominator are nonnegative and the coefficients of the
numerator are nonpositive.
\end{mainthm}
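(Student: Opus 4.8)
The plan is to reduce the statement to a single coefficientwise inequality and then to prove that inequality by the injection advertised in the abstract. First I would apply the quotient rule to write
\[
\frac{\partial}{\partial x_j}\left(\frac{s_{\lambda}(\bs{x})}{s_{\rho}(\bs{x})}\right)
= \frac{\bigl(\partial_{x_j} s_{\lambda}(\bs{x})\bigr)\, s_{\rho}(\bs{x}) - s_{\lambda}(\bs{x})\,\bigl(\partial_{x_j}s_{\rho}(\bs{x})\bigr)}{s_{\rho}(\bs{x})^{2}}.
\]
Since $s_{\rho}$ is a sum of monomials with nonnegative (indeed, unit) coefficients, its square has nonnegative coefficients, so the entire content of the theorem is the assertion that the numerator
\[
P := \bigl(\partial_{x_j} s_{\lambda}\bigr)\, s_{\rho} - s_{\lambda}\,\bigl(\partial_{x_j}s_{\rho}\bigr)
\]
has nonpositive coefficients and is not identically zero.

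Next I would expand $P$ using the combinatorial definition $s_{\lambda} = \sum_S \bs{x}^{S}$, summed over SSYT $S$ of shape $\lambda$, where $\bs{x}^S = \prod_i x_i^{c_i(S)}$, $c_i(S)$ is the number of $i$'s in $S$, and $c(S)=(c_1(S),\ldots,c_N(S))$ is the content vector. Writing $\partial_{x_j}\bs{x}^S = c_j(S)\,\bs{x}^{S}/x_j$ and collecting terms by monomial, one finds for each content vector $\nu$ (with $e_j$ the $j$-th standard basis vector)
\[
[\bs{x}^{\nu}]\,P
= \sum_{\substack{(S,T)\\ c(S)+c(T)=\nu+e_j}} \bigl(c_j(S)-c_j(T)\bigr),
\]
the sum being over pairs $(S,T)$ of SSYT of shapes $\lambda$ and $\rho$ with combined content $\nu+e_j$. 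Hence it suffices to prove, for every content vector $\alpha$, the inequality $\sum_{(S,T)} c_j(S) \le \sum_{(S,T)} c_j(T)$ taken over all pairs with $c(S)+c(T)=\alpha$. Interpreting each factor $c_j$ as the choice of a distinguished $j$-box, the left (resp.\ right) side counts pairs of tableaux carrying a marked $j$ in the $\lambda$-shaped (resp.\ $\rho$-shaped) factor, and the desired inequality becomes the existence of a \emph{content-preserving injection} from the first family into the second.

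To construct this injection I would use the symmetry of the Schur polynomials to branch in the single variable $x_j$: in any SSYT the boxes of value $j$ occupy a horizontal strip $\nu^{(j)}/\nu^{(j-1)}$ between the nested subshapes of boxes with values $\le j-1$ and $\le j$, so each marked tableau is encoded by such a pair of nested shapes together with skew SSYT recording the entries $<j$ and $>j$. This recasts both families as sets of pairs of skew SSYT carrying a marked cell, and the hypothesis $\lambda\subsetneq\rho$ supplies the extra cell(s) of $\rho/\lambda$ needed to relocate the marked $j$ from the smaller factor into the larger one. The main obstacle is precisely the construction and verification of this map: one must specify a canonical cell of $\rho/\lambda$ at which to perform the transfer, check that the resulting fillings remain column-strict and row-weakly-increasing so that they are genuine skew SSYT, confirm that the total content — hence the monomial weight — is unchanged, and, most delicately, that the assignment is injective, so that distinct marked pairs are never identified. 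I expect this last point to absorb the bulk of the work.

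Finally, granting the injection, the coefficientwise inequality gives $[\bs{x}^{\nu}]P\le 0$ for all $\nu$, so $P$ has nonpositive coefficients; this already yields the assertion about \eqref{eq:MainPartDeriv} being a ratio with nonpositive numerator and nonnegative denominator. For the strict decrease it remains to rule out $P\equiv 0$, and here I would avoid the fine structure of the injection entirely: evaluating at $\bs{1}=(1,\ldots,1)$ and combining the symmetry of $s_{\lambda},s_{\rho}$ with Euler's relation for homogeneous polynomials gives
\[
P(\bs{1})
= s_{\rho}(\bs{1})\,\partial_{x_j}s_{\lambda}\big|_{\bs{1}} - s_{\lambda}(\bs{1})\,\partial_{x_j}s_{\rho}\big|_{\bs{1}}
= \frac{s_{\lambda}(\bs{1})\,s_{\rho}(\bs{1})}{N}\bigl(|\lambda|-|\rho|\bigr) < 0 ,
\]
since $|\lambda|<|\rho|$. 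Thus at least one coefficient of $P$ is strictly negative, so $P$ is strictly negative at every point with all $x_i>0$, and the partial derivative is strictly negative there, giving the stated strict monotonicity in each variable.
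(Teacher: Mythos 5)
Your reduction is sound and in fact runs parallel to the paper's: the quotient rule, the observation that $s_\rho^2$ has nonnegative coefficients, and the recasting of the nonpositivity of the numerator as a content-preserving injection between sets of pairs of tableaux with a distinguished occurrence of $j$ is exactly the skeleton the paper builds (there it is phrased by branching on the number of $N$'s, writing $s_\rho(\bs{x})=\sum_d x_N^d s_{\rho/d}(\bs{x}_{N-1})$ via Pieri's rule, and asking for injections $\bs{S}_{(d,e)}\to\bs{S}_{(e,d)}$ for $e>d$). Your Euler-relation evaluation at $\bs{1}$ to rule out $P\equiv 0$ is correct and is a tidy way to get strictness once coefficientwise nonpositivity is known. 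The problem is that the injection itself --- which you explicitly defer as ``the main obstacle'' that will ``absorb the bulk of the work'' --- is the entire mathematical content of the theorem, and your sketch of it does not survive contact with the details. The suggestion that one can ``specify a canonical cell of $\rho/\lambda$ at which to perform the transfer'' and then relocate the marked $j$ is precisely the naive move that fails: the paper's Example~\ref{ex:U*NotInject} exhibits two distinct pairs (with $\rho=(5,4)$, $\lambda=(4,4)$) that are sent to the same image by the natural greedy swap, so injectivity genuinely breaks for the obvious construction.

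What the paper actually does to close this gap is substantially more involved. First, it reduces to the case where $\rho/\lambda$ is a \emph{single box}, by writing $s_\lambda/s_\rho$ as a telescoping product of one-box ratios and applying the product rule (this step is absent from your proposal, and without it the ``extra cell(s) of $\rho/\lambda$'' you invoke are not even a single well-defined location). Second, for the one-box case it defines the swap region $U(S,T)$ not greedily but as the set grown by a ``spanning tree'' of adjacent violations of the skew SSYT conditions (Definition~\ref{def:SpanningTree}), and then proves four separate facts: independence of the choices made in growing $U$ (Lemma~\ref{lem:UIndepSpanTree}), minimality of $U$ among all valid swap sets (Lemma~\ref{lem:UMinimal}), the containment $U\subseteq\rho/e$ guaranteeing the image lands in $\bs{S}_{(e,d)}$ (Lemma~\ref{lem:UinT}), and finally injectivity (Proposition~\ref{prop:Inject}), whose proof is a four-case boundary analysis relying on the minimality property. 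Your proposal, as written, is therefore a correct reduction plus an accurate prediction of where the difficulty lies, but not a proof: the decisive combinatorial construction and its injectivity are missing, and the one concrete mechanism you gesture at is refuted by the paper's own counterexample.
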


After the preparation of this manuscript, the authors learned that Theorem~\ref{mainthm:PartDerDec} has also been proven
by Khare-Tao \cite{KhareTao}. Using results about immanants in \cite{LamPostnikovPylyavskyy},
Khare-Tao consider the reciprocal $s_{\rho}(\bs{x})/s_{\lambda}(\bs{x})$ of the function
discussed here and show that, when the partial derivative is expressed with denominator $s_{\lambda}(\bs{x})^2$, each coefficient of
the numerator as a polynomial in $x_j$ is \emph{Schur-positive}, meaning that it is a sum of Schur polynomials.
Schur-positivity implies \emph{monomial-positivity}, meaning that the coefficient of each monomial is nonnegative, and this is equivalent to
Theorem~\ref{mainthm:PartDerDec}. Monomial-positivity then implies that the partial derivative is \emph{numerically positive},
meaning that its value for positive values of the $x_i$ is positive. See \cite[Proposition~8.1 and Theorem~8.6]{KhareTao}.
Numerical positivity was proven independently by Vishwakarma in \cite[Theorem~4.18 and Proposition~4.10]{Vishwakarma}. See also
\cite[Theorem~1.4]{BeltonGuillotKharePutinar} for a strengthening of numerical positivity and other applications. An alternate proof
of numerical positivity using
the theory of Chebyshev blossoming in M\"{u}ntz spaces was given by Rachid Ait-Haddou as noted after
\cite[Corollary~8.7]{KhareTao}.
Here, we offer a new self-contained proof using combinatorial and mostly elementary methods.

The proof of Theorem~\ref{mainthm:PartDerDec} is given in Section~\ref{sec:Proof}. In Section~\ref{subsec:ProofPartDerivAsSkew}, we give an expression for
the partial derivative of a ratio of Schur polynomials associated to arbitrary partitions in terms of skew Schur polynomials. Using this expression, we reduce
the proof of Theorem~\ref{mainthm:PartDerDec} to the existence of an injective map between sets of pairs of skew SSYT's. The remainder of
Section~\ref{sec:Proof} then establishes the existence of this map. In Section~\ref{sec:SympQuot}, we consider the question of distinguishing
between linear symplectic quotients using the first Laurent coefficient of the Hilbert series of the graded algebra of regular functions
and give an application of Theorem~\ref{mainthm:PartDerDec} as well as similar observations about ratios as in Equation~\eqref{eq:MainPartDeriv}.

Throughout this paper, we use the following notation, for the most part following \cite[Chapter~7]{StanleyEC2}.
An integer partition will be denoted $\rho = (\rho_1,\rho_2,\ldots,\rho_n)$
with $\rho_1\geq\rho_2\geq\cdots\geq\rho_n\geq 0$. The notation $\rho/\lambda$ indicates the skew shape formed by removing the shape
$\lambda$ from the shape $\rho$; if $d$ is an integer, then $\rho/d$ indicates the skew shape formed by removing the first $d$ boxes from the first row of $\rho$.
For convenience of notation, we do allow the $\rho_i$ to be equal to zero,
though the resulting partition and corresponding Schur polynomial can be considered identical to that with the zero entries removed.
A skew SSYT of shape $\rho/\lambda$ is a skew Young diagram of shape $\rho/\lambda$ labeled following the same requirements as for a SSYT.
If $T$ is a (skew) SSYT labeled by $\{1,2,\ldots,N\}$ and $\bs{x} = (x_1,x_2\ldots,x_N)$ is a collection of variables, then
$\bs{x}^T$ denotes the monic monomial formed by replacing labels $i$ in $T$ with variables $x_i$ and multiplying them together.
The entry of $T$ in position $(i,j)$
is denoted $T_{i,j}$. We will succinctly use the phrase \emph{the (skew) SSYT requirements} to refer to the requirements that
$T_{i,j}\leq T_{i+1,j}$ and $T_{i,j} < T_{i,j+1}$ whenever these values are defined.
Given a partition $\rho = (\rho_1, \rho_2, \ldots,\rho_n)$ and a collection of variables
$\bs{x} = (x_1,\ldots, x_N)$, the Schur polynomial $s_\rho(\bs{x})$ can be defined
as
\[
    s_\rho(\bs{x})
    =
    \sum\limits_{T} \bs{x}^T,
\]
where the sum is over all SSYT's of shape $\rho$. The skew Schur polynomial $s_{\rho/\lambda}$ is defined identically where
the sum is over all skew SSYT's of shape $\rho/\lambda$.
We will sometimes identify a partition with the set of positions in the corresponding diagram; i.e., we will say $(i,j)\in\rho/\lambda$
to indicate that the diagram $\rho/\lambda$ contains a box in position $(i,j)$.


\section*{Acknowledgements}

We would like to thank Apoorva Khare for pointing out several related references and past proofs of Theorem~\ref{mainthm:PartDerDec}
as well as the anonymous referees for very helpful comments that improved this manuscript.
This paper developed from H.K.'s Summer Research Fellowship project and continued research in
the Rhodes College Department of Mathematics and Statistics, and the
authors gratefully acknowledge the support of the department and college for these
activities.
D.H. was supported by Simons Foundation grant MPS-TSM-00007788.
H.K. was supported by the Rhodes College Summer Fellowship Program.
C.S. was supported by an AMS-Simons Research Enhancement Grant for PUI Faculty.


\section{Proof of Theorem~\ref{mainthm:PartDerDec}}
\label{sec:Proof}

Throughout this section, fix a finite set of variables $\bs{x} = (x_1,x_2,\ldots,x_N)$ and let $\bs{x}_{N-1} = (x_1,x_2,\ldots,x_{N-1})$.
We will always assume $N \geq n$ where $\lambda = (\lambda_1,\ldots,\lambda_n)$ and $\rho = (\rho_1,\ldots,\rho_n)$.
Recall that Theorem~\ref{mainthm:PartDerDec} considers the partial derivative with respect to a variable $x_j$. It is sufficient by symmetry
to consider the case $j = N$, so we will restrict to this case throughout this section.

To begin, let us briefly outline the strategy of the proof. In Section~\ref{subsec:ProofPartDerivAsSkew}, we give expression~\eqref{eq:PartDerivAsSkew}
for the partial derivative of the ratio of two arbitrary Schur polynomials in terms of skew Schur polynomials.
From this expression, it is clear that Theorem~\ref{mainthm:PartDerDec} follows if each of the terms corresponding to $e > d$ is cancelled
by terms corresponding to values of $e < d$. In Section~\ref{subsec:ProofReductionInject}, we establish notation to explain how this cancellation
will be demonstrated, and in Section~\ref{subsec:ProofSimpleCase} detail a simple specific case that illustrates the idea behind the proof. Section~\ref{subsec:ProofInjection} contains most of the technical constructions and completes the proof in the case that $\lambda$ and $\rho$
differ by a single box, and Section~\ref{subsec:ProofGeneralCase} extends this to the general case.


\subsection{The partial derivative in terms of skew Schur polynomials}
\label{subsec:ProofPartDerivAsSkew}


Our first task is to develop an expression for the partial derivative \eqref{eq:MainPartDeriv} with $j = N$.
See \cite{GrinbergKorniichukEA} for another recent result about partial derivatives of Schur polynomials.
We begin with the following, which benefited from \cite{MvLmathoverflow}.
As usual, $s_{\rho/d}$ denotes the skew Schur polynomial formed by removing a row of size $d$ from $\rho$.

\begin{lemma}
\label{lem:SchurAsSkew}
Let $\rho = (\rho_1,\rho_2,\ldots,\rho_n)$ be a partition. Then
\begin{equation}
\label{eq:SchurAsSkew}
    s_\rho(\bs{x})
    =
    \sum\limits_{d=0}^{\rho_1} x_N^d s_{\rho/d}(\bs{x}_{N-1}).
\end{equation}
\end{lemma}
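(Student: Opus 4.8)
The plan is to prove the identity by a direct application of the combinatorial definition of the Schur polynomial, exploiting the freedom to reorder the alphabet. The essential observation is that, since $s_\rho$ is a symmetric polynomial, its value is unchanged if, when enumerating SSYT of shape $\rho$, I use the total order on the variables in which $x_N$ is the \emph{smallest} letter, that is, $x_N < x_1 < x_2 < \cdots < x_{N-1}$, rather than the standard order. I would first record this reordering as the governing idea, because it is exactly what forces a single removed row $\rho/d$ to appear rather than a more general horizontal strip.

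With this order fixed, I would classify the SSYT $T$ of shape $\rho$ according to the cells carrying the minimal letter $x_N$. The key structural step is that the cells of any SSYT whose entries are at most $k$ form a straight (Young) sub-shape, since the weakly increasing rows and strictly increasing columns guarantee that both the left neighbor and the upper neighbor of such a cell again carry an entry at most $k$. Applying this with $k$ equal to the minimal letter, the $x_N$-cells form a partition shape $\mu \subseteq \rho$; and because all of these cells carry the single smallest letter, strictness of the columns forces every column of $\mu$ to have height at most one. Hence $\mu = (d)$ is a single row for some $0 \leq d \leq \rho_1$, contributing the factor $x_N^d$.

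Removing this row, the remaining cells of $T$ carry letters from $\{x_1,\ldots,x_{N-1}\}$, occupy exactly the skew shape $\rho/d$, and satisfy the skew SSYT requirements with respect to the usual order $x_1 < \cdots < x_{N-1}$; summing their monomials over all such fillings produces $s_{\rho/d}(\bs{x}_{N-1})$. Conversely, any choice of $0 \leq d \leq \rho_1$ together with a skew SSYT of shape $\rho/d$ in the letters $\{x_1,\ldots,x_{N-1}\}$ reassembles, by placing $x_N$ in the first $d$ cells of the first row, into a valid SSYT of shape $\rho$ in the reordered alphabet. Grouping the sum over $T$ by the value of $d$ therefore yields $\sum_{d=0}^{\rho_1} x_N^d s_{\rho/d}(\bs{x}_{N-1})$, establishing \eqref{eq:SchurAsSkew}.

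I expect the only genuinely delicate points to be the two justifications underlying the reordering: first, that symmetry of $s_\rho$ legitimately allows $x_N$ to be treated as the minimal letter, so that the reordered enumeration still computes the same polynomial; and second, the verification that the minimal-letter cells form a single row, which is precisely where the single-row skew shape $\rho/d$, as opposed to an arbitrary horizontal strip, enters. Once these are in place, the bijective matching between the two sides is routine.
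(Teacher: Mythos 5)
Your proof is correct, but it takes a genuinely different route from the paper's. The paper keeps the standard order (so $N$ is the \emph{largest} letter), observes that the cells labeled $N$ form a horizontal strip of some length $d$, writes $s_\rho(\bs{x}) = \sum_d x_N^d \sum_{\mu \in \rho - d} s_\mu(\bs{x}_{N-1})$ where $\rho - d$ is the set of shapes obtained by deleting a horizontal strip of length $d$, and then invokes Pieri's rule (\cite[Corollary~7.15.9]{StanleyEC2}) to collapse the inner sum to $s_{\rho/d}(\bs{x}_{N-1})$. You instead exploit the symmetry of $s_\rho$ to declare $x_N$ the \emph{smallest} letter, which forces the $x_N$-cells to form an initial segment $(d)$ of the first row rather than an arbitrary horizontal strip, so the skew shape $\rho/d$ appears directly and no Pieri-type identity is needed; the bijection with pairs (row of length $d$, skew SSYT of shape $\rho/d$) is then immediate. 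The trade-off: your argument is more self-contained, replacing the citation of Pieri's rule with the symmetry of the Schur polynomial — itself a nontrivial theorem under the combinatorial definition (e.g.\ via Bender--Knuth involutions), but one the paper already assumes elsewhere (it reduces to $j = N$ ``by symmetry'' and uses symmetry again in Proposition~\ref{prop:S1Bound}), so you incur no assumption the paper does not. Your two flagged ``delicate points'' are indeed the right ones, and both close routinely: the reordered enumeration computes $s_\rho$ at a permutation of the variables, hence $s_\rho$ itself; and the minimal-letter cells form a Young subdiagram with columns of height at most one, hence a single row.
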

\begin{proof}
In a SSYT with entries at most $N$, the label $N$ can only occur in the last entry of a column.
There are $\rho_1$ columns in the shape $\rho$ and therefore at most $\rho_1$ boxes with entry $N$. For each
$d = 0, 1, \ldots, \rho_1$, let $T_1^{(d)},\ldots, T_{k_d}^{(d)}$ denote the list of SSYT's of shape $\rho$
in which $N$ occurs $d$ times and let $\bs{x}_{N-1}^{T_i^{(d)}\ast}$ denote the monomial $\bs{x}^{T_i^{(d)}}$
with the $x_N$ factors removed so that $\bs{x}^{T_i^{(d)}} = x_N^d \bs{x}_{N-1}^{T_i^{(d)}\ast}$.
Then we can express
\[
    s_\rho(\bs{x})
    =
    \sum\limits_{d=0}^{\rho_1} x_N^d\sum\limits_{i=1}^{k_d} \bs{x}_{N-1}^{T_i^{(d)}\ast}.
\]
Recall \cite[p.~339]{StanleyEC2} that a \emph{horizontal strip of length $d$} is defined to be a skew shape $\rho/\nu$
with no two squares in the same column. Let $\rho - d$ denote the set of Young diagrams formed from $\rho$ by removing
a horizontal strip of length $d$. Then $\bs{x}_{N-1}^{T_i^{(d)}\ast}$, $i = 1,\ldots, k_d$ is a list of all monomials
corresponding to a SSYT whose shape is an element of $\rho - d$. That is, we have
\begin{equation}
\label{eq:SchurAs-d}
    s_\rho(\bs{x})
    =
    \sum\limits_{d=0}^{\rho_1} x_N^d\sum\limits_{\mu\in\rho - d} s_\mu(\bs{x}_{N-1}).
\end{equation}
By Pieri's rule \cite[Corollary~7.15.9]{StanleyEC2},
\[
    \sum\limits_{\mu\in\rho - d} s_\mu(\bs{x}_{N-1})
    =
    s_{\rho/d}(\bs{x}_{N-1})
\]
which applied to Equation~\eqref{eq:SchurAs-d} completes the proof.
\end{proof}

We now give the following expression for the partial derivative of a ratio of Schur polynomials in terms of skew Schur polynomials.
Equation~\eqref{eq:PartDerivAsSkew} below also appears in the proof of \cite[Theorem~8.6]{KhareTao}, where it was proven using the same argument.

\begin{corollary}
\label{cor:PartDerivAsSkew}
Let $\lambda = (\lambda_1,\lambda_2,\ldots,\lambda_n)$ and $\rho = (\rho_1,\rho_2,\ldots,\rho_n)$ be partitions. Then
\begin{equation}
\label{eq:PartDerivAsSkew}
    \frac{\partial}{\partial x_N}\left(\frac{s_{\lambda}(\bs{x})}{s_\rho(\bs{x})}\right)
    =
    \frac{
        \sum\limits_{d=0}^{\rho_1} \sum\limits_{e=0}^{\lambda_1}
            (e - d) x_N^{d + e - 1} s_{\rho/d}(\bs{x}_{N-1}) s_{\lambda/e}(\bs{x}_{N-1})
        }
        {s_\rho(\bs{x})^2}.
\end{equation}
\end{corollary}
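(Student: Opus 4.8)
The plan is to apply the quotient rule to $s_\lambda/s_\rho$ and then substitute the expansion from Lemma~\ref{lem:SchurAsSkew} into both the numerator and the derivatives appearing there. Writing $s_\lambda(\bs{x}) = \sum_{e=0}^{\lambda_1} x_N^e\, s_{\lambda/e}(\bs{x}_{N-1})$ and $s_\rho(\bs{x}) = \sum_{d=0}^{\rho_1} x_N^d\, s_{\rho/d}(\bs{x}_{N-1})$, I first observe that the skew Schur polynomials $s_{\lambda/e}(\bs{x}_{N-1})$ and $s_{\rho/d}(\bs{x}_{N-1})$ do not involve $x_N$, since their variables are exactly $\bs{x}_{N-1}=(x_1,\ldots,x_{N-1})$. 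Hence differentiating with respect to $x_N$ acts only on the powers $x_N^e$ and $x_N^d$, giving
\begin{align}
\label{eq:PlanDerivs}
    \frac{\partial}{\partial x_N} s_\lambda(\bs{x}) &= \sum_{e=0}^{\lambda_1} e\, x_N^{e-1}\, s_{\lambda/e}(\bs{x}_{N-1}), &
    \frac{\partial}{\partial x_N} s_\rho(\bs{x}) &= \sum_{d=0}^{\rho_1} d\, x_N^{d-1}\, s_{\rho/d}(\bs{x}_{N-1}).
\end{align}

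Next I would apply the quotient rule, writing the partial derivative as
\[
    \frac{\partial}{\partial x_N}\left(\frac{s_\lambda(\bs{x})}{s_\rho(\bs{x})}\right)
    =
    \frac{\dfrac{\partial s_\lambda}{\partial x_N}\, s_\rho - s_\lambda\, \dfrac{\partial s_\rho}{\partial x_N}}{s_\rho(\bs{x})^2}.
\]
Substituting the four expansions into the numerator produces a double sum over the index pairs $(d,e)$. In the first product $\frac{\partial s_\lambda}{\partial x_N}\, s_\rho$ the generic term carries a factor $e\, x_N^{e-1}\cdot x_N^{d} = e\, x_N^{d+e-1}$ together with $s_{\lambda/e}(\bs{x}_{N-1})\, s_{\rho/d}(\bs{x}_{N-1})$, while in the second product $s_\lambda\, \frac{\partial s_\rho}{\partial x_N}$ the generic term carries $d\, x_N^{e}\cdot x_N^{d-1} = d\, x_N^{d+e-1}$ times the same pair of skew Schur polynomials. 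Since both double sums run over the same index set $0\le d\le\rho_1$, $0\le e\le\lambda_1$ and share the identical monomial factor $x_N^{d+e-1} s_{\rho/d}(\bs{x}_{N-1}) s_{\lambda/e}(\bs{x}_{N-1})$, I can combine them term by term, so that the coefficient of each such factor becomes $e - d$. This yields exactly Equation~\eqref{eq:PartDerivAsSkew}.

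I expect this proof to be almost entirely routine, as it is just the quotient rule combined with Lemma~\ref{lem:SchurAsSkew}; the only point requiring a moment of care is the bookkeeping that aligns the two double sums so their terms can be merged into the single factor $(e-d)$. The mildest subtlety is the handling of the edge indices: the terms with $e=0$ in $\frac{\partial s_\lambda}{\partial x_N}$ and $d=0$ in $\frac{\partial s_\rho}{\partial x_N}$ contribute a nominal factor $x_N^{-1}$, but these are precisely the terms whose coefficient $e$ or $d$ vanishes, so no genuine negative power of $x_N$ survives and the expression is a bona fide polynomial identity divided by $s_\rho(\bs{x})^2$. Once this alignment is checked, the stated formula follows immediately, and I would simply remark that the same derivation appears in the proof of \cite[Theorem~8.6]{KhareTao}.
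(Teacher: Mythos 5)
Your proof is correct and follows essentially the same route as the paper: both apply the quotient rule and substitute the expansion from Lemma~\ref{lem:SchurAsSkew}, then merge the two double sums over the common index set to obtain the coefficient $(e-d)$. Your extra remark about the $e=0$ and $d=0$ terms carrying a vanishing coefficient on the nominal $x_N^{-1}$ factor is a nice touch of care that the paper leaves implicit.
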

\begin{proof}
Using Lemma~\ref{lem:SchurAsSkew} and the quotient rule, we compute the partial derivative in Equation~\eqref{eq:PartDerivAsSkew} to be
\begin{align*}
    &
    \frac{s_\rho(\bs{x}) \frac{\partial s_{\lambda}(\bs{x})}{\partial x_N}
        - s_\lambda(\bs{x}) \frac{\partial s_\rho(\bs{x})}{\partial x_N} }
        {s_\rho(\bs{x})^2}
    \\&\quad=
    \frac{
        \sum\limits_{d=0}^{\rho_1} x_N^d s_{\rho/d}(\bs{x}_{N-1})
        \frac{\partial }{\partial x_N}
        \sum\limits_{e=0}^{\lambda_1} x_N^e s_{\lambda/e}(\bs{x}_{N-1})
        -
        \sum\limits_{e=0}^{\lambda_1} x_N^e s_{\lambda/e}(\bs{x}_{N-1})
        \frac{\partial }{\partial x_N}
        \sum\limits_{d=0}^{\rho_1} x_N^d s_{\rho/d}(\bs{x}_{N-1})
        }{s_\rho(\bs{x})^2}
    \\&\quad=
    \frac{
        \sum\limits_{d=0}^{\rho_1} \sum\limits_{e=0}^{\lambda_1}
            e x_N^{d + e - 1} s_{\rho/d}(\bs{x}_{N-1}) s_{\lambda/e}(\bs{x}_{N-1})
        -
        \sum\limits_{e=0}^{\lambda_1} \sum\limits_{d=0}^{\rho_1}
        d x_N^{d + e - 1} s_{\rho/d}(\bs{x}_{N-1}) s_{\lambda/e}(\bs{x}_{N-1})
        }{s_\rho(\bs{x})^2}
    \\&\quad=
    \frac{
        \sum\limits_{d=0}^{\rho_1} \sum\limits_{e=0}^{\lambda_1}
            (e - d) x_N^{d + e - 1} s_{\rho/d}(\bs{x}_{N-1}) s_{\lambda/e}(\bs{x}_{N-1})
        }{s_\rho(\bs{x})^2}.
        \qedhere
\end{align*}
\end{proof}


\subsection{Reduction to the existence of an injective map}
\label{subsec:ProofReductionInject}

We now assume that $\lambda = (\lambda_1,\lambda_2,\ldots,\lambda_n)$ and $\rho = (\rho_1,\rho_2,\ldots,\rho_n)$ are partitions such
that $\lambda\subsetneq\rho$, i.e., $\lambda_i \leq \rho_i$ for each $i$ and $\lambda_i < \rho_i$ for at least one $i$.
To prove Theorem~\ref{mainthm:PartDerDec}, we will show that the numerator of the right side of Equation~\eqref{eq:PartDerivAsSkew}
consists of terms with nonpositive coefficients, i.e., that the terms with positive coefficients all cancel. Note that each positive
term is of the form $\bs{x}_{N-1}^{S} \bs{x}_{N-1}^{T}$ where $S$ is a skew SSYT of shape $\rho/d$,
$T$ is a skew SSYT of shape $\lambda/e$, and $e > d$.

\begin{notation}
\label{not:S(d,e)P(d,e)}
For $0\leq d\leq\rho_1$ and $0\leq e\leq\lambda_1$, let $\bs{S}_{(d,e)}$ denote the set of pairs $(S, T)$ where $S$ is a skew SSYT of
shape $\rho/d$ and $T$ is a skew SSYT of shape $\lambda/e$, both with entries at most $N - 1$. For $(S,T)\in\bs{S}_{(d,e)}$, or more generally
for a pair of tableaux with labels $\leq N - 1$, we let
\[
    \bs{x}_{N-1}^{(S,T)} = \bs{x}_{N-1}^S \bs{x}_{N-1}^T.
\]
Further, let
\begin{equation}
\label{eq:defP(d,e)}
    \bs{P}(d,e) = (e - d) x_N^{d + e - 1} s_{\rho/d}(\bs{x}_{N-1}) s_{\lambda/e}(\bs{x}_{N-1}),
\end{equation}
so that the numerator of the right side of Equation~\eqref{eq:PartDerivAsSkew} can be written
\[
    \sum\limits_{d=0}^{\rho_1} \sum\limits_{e=0}^{\lambda_1} \bs{P}(d,e).
\]
\end{notation}

To prove Theorem~\ref{mainthm:PartDerDec}, it is sufficient to construct for each $e > d$ an injective map
\[
    \varphi\colon\bs{S}_{(d,e)}\to\bs{S}_{(e,d)}
\]
such that
\[
    \bs{x}_{N-1}^{(S,T)} = \bs{x}_{N-1}^{\varphi(S,T)}.
\]
It would follow that when $e \neq d$, $\bs{P}(d,e) + \bs{P}(e,d)$ consists of nonpositive terms.


\subsection{A simple case}
\label{subsec:ProofSimpleCase}

Before considering the construction of the injective map $\varphi$  in the next section, let us first consider a special case that arose in the applications
considered in Section~\ref{sec:SympQuot}. Here, we will define a map $\psi$ that plays the role of $\varphi$.
Along with serving to illustrate the idea behind the proof, it is interesting that in this case, $\psi$ turns out to be a bijection.
Note that this result does not hold if $N > n + 1$. A simple computation verifies that in the case of $\rho = (2,1)$ and $\lambda = (1,1)$, if $N = 3$,
then $\bs{S}_{(0,1)}$ and $\bs{S}_{(1,0)}$ contain a different number of elements.

\begin{lemma}
\label{lem:ProofSimpleCase}
Let $\lambda = (\lambda_1,\lambda_2,\ldots,\lambda_n)$ and $\rho = (\rho_1,\rho_2,\ldots,\rho_n)$ be partitions such
that $\lambda_i \leq \rho_i$ for each $i$ and $\lambda_i < \rho_i$ for at least one $i$. Assume that the $\lambda_i$
and $\rho_i$ are all nonzero and the number of variables $N = n + 1$.
There is a bijection $\psi\colon\bs{S}_{(0,1)}\to\bs{S}_{(1,0)}$ such that $\bs{x}_{N-1}^{(S,T)} = \bs{x}_{N-1}^{\psi(S,T)}$,
whereby $\bs{P}(1,0) + \bs{P}(0,1) = 0$.
\end{lemma}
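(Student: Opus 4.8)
The plan is to construct the bijection $\psi$ explicitly by moving a single box between the two tableaux in a pair, exploiting the constraint $N = n+1$ which forces the skew tableaux to have a very rigid structure. First I would understand the source and target sets concretely. An element of $\bs{S}_{(0,1)}$ is a pair $(S,T)$ where $S$ is a skew SSYT of shape $\rho/0 = \rho$ and $T$ is a skew SSYT of shape $\lambda/1$ (the partition $\lambda$ with its first box removed from row one), both filled with entries from $\{1,\ldots,N-1\} = \{1,\ldots,n\}$. Dually, an element of $\bs{S}_{(1,0)}$ is a pair $(S',T')$ where $S'$ has shape $\rho/1$ and $T'$ has shape $\lambda$. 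The key observation is that since there are exactly $n$ available labels and the columns must strictly increase, a column of height $n$ must be filled exactly with $1,2,\ldots,n$, and in general the SSYT requirements with only $n$ labels severely restrict the entries: in particular the full shape $\rho$ (which has $n$ rows since all $\rho_i \neq 0$) and the full shape $\lambda$ are nearly forced in their tall columns.

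Next I would exhibit the map. The natural candidate is to transfer the ``missing box'' from the $\lambda$-tableau to the $\rho$-tableau while preserving the total monomial. Concretely, in the pair $(S,T) \in \bs{S}_{(0,1)}$, the tableau $S$ of shape $\rho$ carries an entry in position $(1,1)$ that the target shape $\rho/1$ will lack, while $T$ of shape $\lambda/1$ lacks the box at $(1,1)$ that the target shape $\lambda$ must fill. I would define $\psi(S,T) = (S',T')$ by removing the entry at a suitable corner of $S$ and inserting a compatible entry at position $(1,1)$ of $T$, choosing the labels so that both resulting fillings are valid skew SSYT and so that $\bs{x}_{N-1}^S \bs{x}_{N-1}^T = \bs{x}_{N-1}^{S'}\bs{x}_{N-1}^{T'}$. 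Because $N = n+1$, the entry that gets moved is essentially determined by the column structure: the first column of $S$ must read $1,2,\ldots,n$ so its top entry is forced, and this rigidity is exactly what makes the transfer canonical and reversible.

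The main step is then to verify that $\psi$ is well-defined and bijective. For well-definedness I must check that after the box transfer both $S'$ and $T'$ satisfy the skew SSYT inequalities; here the $N = n+1$ hypothesis does the real work, since it pins down enough entries that the inequalities $T'_{i,j} \leq T'_{i+1,j}$ and $T'_{i,j} < T'_{i,j+1}$ reduce to a small number of cases at the modified corner. For bijectivity the cleanest route is to construct the inverse map $\psi^{-1} \colon \bs{S}_{(1,0)} \to \bs{S}_{(0,1)}$ by the symmetric reverse transfer (moving a box from the $\lambda$-side back to the $\rho$-side) and check that the two compositions are the identity; this is easier than a counting argument and simultaneously confirms monomial preservation, since both directions move the same label from one tableau to the other. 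The conclusion $\bs{P}(1,0) + \bs{P}(0,1) = 0$ then follows immediately: by Notation~\ref{not:S(d,e)P(d,e)} and Equation~\eqref{eq:defP(d,e)}, the polynomial $\bs{P}(0,1)$ has coefficient $(1-0) = +1$ and $\bs{P}(1,0)$ has coefficient $(0-1) = -1$ on the common factor $x_N^{0}$, and the monomial-preserving bijection $\psi$ matches each term of $s_\rho\, s_{\lambda/1}$ with an equal term of $s_{\rho/1}\, s_\lambda$, so the two sums cancel.

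I expect the principal obstacle to be pinning down the exact rule for which box is moved and verifying the SSYT inequalities at the seam after the transfer, particularly ruling out the degenerate situations the authors flag in their remark that the statement fails when $N > n+1$. The hypothesis that all $\lambda_i, \rho_i$ are nonzero and $N = n+1$ must be invoked precisely at this verification step, so the delicate part is making sure the argument genuinely breaks without it rather than merely becoming harder, which is what the counterexample $\rho = (2,1)$, $\lambda = (1,1)$, $N = 3$ is meant to signal.
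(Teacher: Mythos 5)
There is a genuine gap: the single-box transfer you propose cannot be made well-defined, even under the full hypotheses of the lemma. Since $S$ has shape $\rho$ and $S'$ must have shape $\rho/1$, the box you delete from $S$ is forced to be the one at position $(1,1)$, and since the first column of $S$ is forced to read $1,2,\ldots,n$, its label is forced to be $1$. Monomial preservation then forces the label you insert at position $(1,1)$ of $T$ to be that same $1$. But the resulting $T'$ must satisfy the column condition $T'_{1,1} = 1 < T'_{2,1} = T_{2,1}$, and nothing prevents $T_{2,1} = 1$: the first column of $T$ only satisfies $T_{i,1} \leq i$. Concretely, take $\rho = (2,1)$, $\lambda = (1,1)$, $n = 2$, $N = 3$, let $S$ have first column $1,2$ with $S_{1,2} = 2$, and let $T$ be the single box at $(2,1)$ labeled $1$; then your rule produces a first column of $T'$ containing two $1$'s, which is not a skew SSYT. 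This is not a fixable verification detail: every ingredient of your rule (which box leaves $S$, which label enters $T$, and where it goes) is pinned down by the shapes and by monomial preservation, so there is no freedom left to repair it. Note also that this failure occurs with all hypotheses of the lemma in force, so it is not one of the degenerate situations tied to $N > n+1$ that you expected to be the delicate point.

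The paper's proof avoids this by swapping the \emph{entire first columns} of $S$ and $T$, not a single box: $S'$ is $S$ with its first column replaced by the (shorter) first column of $T$, and $T'$ is $T$ with its first column replaced by $1,2,\ldots,n$. The rigidity you correctly identified ($S_{i,1} = i$, hence $T_{i,1} \leq i$ from $T_{n,1} \leq n$ and column strictness, together with $S_{i,2} \geq i$ and $T_{i,2} \geq i$) is exactly what makes this column swap land in $\bs{S}_{(1,0)}$: across the seam one needs $T_{i,1} \leq i \leq S_{i,2}$ in $S'$ and $i \leq T_{i,2}$ in $T'$, and both hold. The swap is manifestly invertible (performing the same column swap on $(S',T')$ recovers $(S,T)$), so it is a bijection preserving $\bs{x}_{N-1}^{(S,T)}$; your final paragraph deducing $\bs{P}(1,0) + \bs{P}(0,1) = 0$ from such a bijection is correct and matches the paper.
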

\begin{proof}
Let $(S, T)\in\bs{S}_{(0,1)}$. Then $S$ is a SSYT of shape $\rho$ and $T$ a skew SSYT of shape $\lambda/1$, both with entries at most
$N - 1 = n$. Because the first column of $S$ is increasing and has $n$ entries, this column is determined; we have $S_{i,1} = i$ for each $i$,
and the same holds for any SSYT of shape $\lambda$. Then as the first column of $T$ is increasing and $T_{n,1}\leq n$, we have
$T_{i,1}\leq i$ for each $i$, and the same holds for any skew SSYT of shape $\rho/1$. As the second column of $S$ is
increasing, we have $S_{i,2}\geq i$ for each $i$; similarly, $T_{i,2}\geq i$ for each $i$, and the same again holds for any (skew) SSYT
of shape $\lambda$ or $\lambda/1$.

Based on these above observations, we can define $\psi(S,T) = (S^\prime, T^\prime)\in\bs{S}_{(1,0)}$ by swapping the first columns of
$S$ and $T$, i.e., $S^\prime$ is formed from $S$ by replacing the first column with that of $T$, and $T^\prime$ is formed from $T$
by replacing the first column with that of $S$. See Figure~\ref{fig:GenDecreas1001}.
This process is clearly invertible, implying that $\psi\colon\bs{S}_{(0,1)}\to\bs{S}_{(1,0)}$ is a bijection
that preserves the products of the corresponding monomials as required.
\end{proof}

\begin{figure}[!htb]
\centering
\begin{subfigure}{.4\textwidth}
\centering
\ytableausetup{mathmode, boxframe=normal, boxsize=2em}
\begin{ytableau}
\scriptstyle 1  &   \scriptstyle S_{1,2}    &   \scriptstyle S_{1,3}    &   \none[\scriptstyle\cdots]   &   \none[\scriptstyle\cdots]
                &   \scriptstyle S_{1,\rho_1}
\\
\scriptstyle 2 &    \scriptstyle S_{2,2}    &   \scriptstyle S_{2,3}    &   \none[\scriptstyle\cdots]   &   \scriptstyle S_{2,\rho_2}
\\
\scriptstyle 3 &    \scriptstyle S_{3,2}    &   \none[\scriptstyle\cdots]   &   \scriptstyle S_{2,\rho_3}
\\
\none[\scriptstyle\vdots]   &   \none[\scriptstyle\vdots]   &   \none[\scriptstyle\qquad\iddots]
\\
\scriptsize n   &   \none[\scriptstyle\cdots] &   \scriptstyle S_{n,\rho_n}
\end{ytableau}
\caption*{$S$}
\end{subfigure}
\begin{subfigure}{.4\textwidth}
\centering
\ytableausetup{mathmode, boxframe=normal, boxsize=2em}
\begin{ytableau}
\none   &   *(lightgray) \scriptstyle T_{1,2}   &   *(lightgray) \scriptstyle T_{1,3}   & \none[\scriptstyle\cdots] &   \none[\scriptstyle\cdots]
        &   *(lightgray) \scriptstyle T_{1,\lambda_1}
\\
*(lightgray) \scriptstyle T_{2,1}   &   *(lightgray) \scriptstyle T_{2,2}   &   *(lightgray) \scriptstyle T_{2,3}   &   \none[\scriptstyle\cdots]
        &   *(lightgray) \scriptstyle T_{2,\lambda_2}
\\
*(lightgray) \scriptstyle T_{3,1}   &   *(lightgray) \scriptstyle T_{3,2}   &   \none[\scriptstyle\cdots]
        &   *(lightgray) \scriptstyle T_{3,\lambda_3}
\\
\none[\scriptstyle\vdots]   &   \none[\scriptstyle\vdots]   &   \none[\scriptstyle\qquad\iddots]
\\
*(lightgray) \scriptstyle T_{n,1}   &   \none[\scriptstyle\cdots]   &   *(lightgray) \scriptstyle T_{n,\lambda_n}
\end{ytableau}
\caption*{$T$}
\end{subfigure}

\bigskip

\begin{subfigure}{\textwidth}
\centering
\begin{tikzcd}
(S, T)\in\bs{S}_{(0,1)} \arrow[mapsto]{d}{\psi}
\\
(S^\prime, T^\prime)\in\bs{S}_{(1,0)}
\end{tikzcd}
\end{subfigure}

\bigskip

\begin{subfigure}{.4\textwidth}
\centering
\ytableausetup{mathmode, boxframe=normal, boxsize=2em}
\begin{ytableau}
\none           &   \scriptstyle S_{1,2}    &   \scriptstyle S_{1,3}    &   \none[\scriptstyle\cdots]   &   \none[\scriptstyle\cdots]
                &   \scriptstyle S_{1,\rho_1}
\\
*(lightgray) \scriptstyle T_{2,1}
                &    \scriptstyle S_{2,2}    &   \scriptstyle S_{2,3}    &   \none[\scriptstyle\cdots]   &   \scriptstyle S_{2,\rho_2}
\\
*(lightgray) \scriptstyle T_{3,1}
                &    \scriptstyle S_{3,2}    &   \none[\scriptstyle\cdots]   &   \scriptstyle S_{3,\rho_3}
\\
\none[\scriptstyle\vdots]   &   \none[\scriptstyle\vdots]   &   \none[\scriptstyle\qquad\iddots]
\\
*(lightgray) \scriptstyle T_{n,1}
                            &   \none[\scriptstyle\cdots]   &   \scriptstyle S_{n,\rho_n}
\end{ytableau}
\caption*{$S^\prime$}
\end{subfigure}
\begin{subfigure}{.4\textwidth}
\centering
\ytableausetup{mathmode, boxframe=normal, boxsize=2em}
\begin{ytableau}
\scriptstyle 1  &   *(lightgray) \scriptstyle T_{1,2}   &   *(lightgray) \scriptstyle T_{1,3}   & \none[\scriptstyle\cdots] &   \none[\scriptstyle\cdots]
        &   *(lightgray) \scriptstyle T_{1,\lambda_1}
\\
\scriptstyle 2   &   *(lightgray) \scriptstyle T_{2,2}   &   *(lightgray) \scriptstyle T_{2,3}   &   \none[\scriptstyle\cdots]
        &   *(lightgray) \scriptstyle T_{2,\lambda_2}
\\
\scriptstyle 3   &   *(lightgray) \scriptstyle T_{3,2}   &   \none[\scriptstyle\cdots]
        &   *(lightgray) \scriptstyle T_{3,\lambda_3}
\\
\none[\scriptstyle\vdots]   &   \none[\scriptstyle\vdots]   &   \none[\scriptstyle\qquad\iddots]
\\
\scriptsize n   &   \none[\scriptstyle\cdots] &   *(lightgray) \scriptstyle T_{n,\lambda_n}
\end{ytableau}
\caption*{$T^\prime$}
\end{subfigure}
\caption{Illustration of the bijection $\bs{S}_{(0,1)}\to\bs{S}_{(1,0)}$.}
\label{fig:GenDecreas1001}
\end{figure}

In the next section, we will define a similar map $\varphi$ for general values of $d < e$
under the assumption that $\lambda$ differs from $\rho$ by a single box, which we will see in Section~\ref{subsec:ProofGeneralCase}
is sufficient to prove Theorem~\ref{mainthm:PartDerDec}.
Though the construction is significantly more technical, the idea remains the same as that of Lemma~\ref{lem:ProofSimpleCase};
$\varphi(S,T)$ is formed by swapping a certain collection of labeled boxes between the two (skew) SSYT's.


\subsection{An injective map in the case that $\lambda$ and $\rho$ differ by one box}
\label{subsec:ProofInjection}

In this section, we construct the map $\varphi\colon\bs{S}_{(d,e)}\to\bs{S}_{(e,d)}$ described in Section~\ref{subsec:ProofReductionInject} in the case that
$\lambda$ is given by $\rho$ with a single box removed and hence prove Theorem~\ref{mainthm:PartDerDec} in this case; see Theorem~\ref{thrm:PartDerDecOneBox}.
\textbf{Throughout this section, we assume that there is exactly one $r$ such that $\lambda_i = \rho_i$ for $i\neq r$ and $\lambda_r = \rho_r - 1$}.
With this assumption on $\lambda$ and $\rho$, let us establish the following.

\begin{notation}
\label{not:defS_U}
Let $e > d$ and let $(S,T)\in\bs{S}_{(d,e)}$ so that $S$ and $T$ are skew SSYT's of
shapes $\rho/d$ and $\lambda/e$, respectively, with entries at most $N - 1$.
It will be convenient to adopt the convention that $T_{1,j} = 0$ for $d < j\leq e$.

Given any subset $U$ of positions in the skew shape $\rho/d$ such that $(r, \rho_r)\in U$, we define a pair of skew tableaux
$(S_U^\prime, T_U^\prime)$ by swapping the entries of $U$ between $S$ and $T$ and then interchanging the resulting tableaux. That is,
$S_U^\prime$ results from $T$ by adding a box at position $(r, \rho_r)$ with entry $S_{r, \rho_r}$ and replacing each $T_{i,j}$
such that $(i,j)\in U\smallsetminus \{ (r, \rho_r)\}$ with $S_{i,j}$, and $T_U^\prime$ results from $S$ by removing the box at
position $(r, \rho_r)$ and replacing each $S_{i,j}$ such that $(i,j)\in U\smallsetminus \{ (r, \rho_r)\}$ with $T_{i,j}$; see Figure~\ref{fig:FormingSUTU}.
Note that the pair $(S_U^\prime, T_U^\prime)$ depends on $S$, $T$, and $U$, but we will often refer to $S_U^\prime$ and $T_U^\prime$
individually when there is no risk of confusion.
Note further that $\bs{x}_{N-1}^{(S,T)} = \bs{x}_{N-1}^{(S_U^\prime,T_U^\prime)}$
as the labels are simply permuted between the two skew SSYT's $S$ and $T$.

We continue to use the convention that a box labeled $0$ in $S_U^\prime$ or $T_U^\prime$ is missing from the skew shape.
The shapes of $S_U^\prime$ or $T_U^\prime$ need not be skew shapes, and if they are,
the resulting $S_U^\prime$ or $T_U^\prime$ need not be skew SSYT's as they may fail the skew SSYT requirements. Moreover,
they are only tableaux of shape $\lambda/d$ and $\rho/e$, respectively, if $U$ is a subset of
the skew shape $\rho/e$; if this condition fails, then the formation of $(S_U^\prime, T_U^\prime)$ may involve moving $S_{1,j}$
with $d< j \leq e$ to $T$.
\end{notation}

\begin{figure}[!htb]
\centering
\begin{subfigure}{.4\textwidth}
\centering
\ytableausetup{mathmode, boxframe=normal, boxsize=1.7em}
\begin{ytableau}
\none &  \none   &     1  &   *(lightgray)  1  &   *(lightgray)  2
\\
 1  &   *(lightgray) 1   &     *(lightgray) 2
\\
 3
\end{ytableau}
\caption*{$S$}
\end{subfigure}
\vspace{.5cm}
\begin{subfigure}{.4\textwidth}
\centering
\ytableausetup{mathmode, boxframe=normal, boxsize=1.7em}
\begin{ytableau}
\none &  \none   &     \none  &   3
\\
 2  &   3   &     4
\\
 4
\end{ytableau}
\caption*{$T$}
\end{subfigure}

\begin{subfigure}{.4\textwidth}
\centering
\ytableausetup{mathmode, boxframe=normal, boxsize=1.7em}
\begin{ytableau}
\none &  \none   &     \none  &   *(lightgray) 1 &  *(lightgray) 2
\\
 2  &   *(lightgray) 1   &     *(lightgray) 2
\\
 4
\end{ytableau}
\caption*{$S_U^\prime$}
\end{subfigure}
\begin{subfigure}{.4\textwidth}
\centering
\ytableausetup{mathmode, boxframe=normal, boxsize=1.7em}
\begin{ytableau}
\none &  \none   &     1  &   *(lightgray) 3
\\
 1  &   *(lightgray) 3   &     *(lightgray) 4
\\
 3
\end{ytableau}
\caption*{$T_U^\prime$}
\end{subfigure}
\caption{An example of forming $(S_U^\prime, T_U^\prime)$ from $(S, T)$ where $U$ is the set of shaded positions in $S$.
Each box in $U$ is replaced by the box in the same position in $T$, or removed if $T$ contains no box in this position,
to form $T_U^\prime$. Similarly, each box in $U$ is placed in the same position in $T$ to form $S_U^\prime$.
The boxes that have been moved between $S$ and $T_U^\prime$, and $T$ and $S_U^\prime$, respectively, are shaded in $S_U^\prime$ and $T_U^\prime$.}
\label{fig:FormingSUTU}
\end{figure}

In order to define $\varphi$, we will define for each $(S,T)\in\bs{S}_{(d,e)}$ a specific set $U = U(S,T)\subset \rho/d$ such that
$(S_U^\prime, T_U^\prime)\in\bs{S}_{(e,d)}$, and in particular is a pair of skew SSYT's.
We will form $U$ recursively as follows. As the initial condition, let $U = \{(r,\rho_r)\}$.
If $(S_U^\prime, T_U^\prime)\notin\bs{S}_{(e,d)}$, i.e., $(S_U^\prime, T_U^\prime)$ is not a pair of skew SSYT's,
then one or both of these tableaux fail the skew SSYT requirements at the boundary of $U$.
That is, there is in either $S_U^\prime$ or $T_{U}^\prime$ (or both) an $(i,j)\notin U$ such that
the skew SSYT requirements fail for one of the pairs $(i,j), (i\pm 1,j)$ or $(i,j), (i,j\pm 1)$. Noting that there may be many such $(i,j)$, we
choose one and add it to $U$. The process terminates when $S_U^\prime$ and
$T_U^\prime$ are both skew SSYT's, i.e., there is no $(i,j)$ at which the skew SSYT requirements fail.

Of course, the description above is not well-defined; there can be many choices of $(i,j)$ at each step in the recursion,
and it is unclear whether the result depends on the choices made. Moreover, while a $U$ such that
$(S_U^\prime, T_U^\prime)$ is a pair of skew SSYT's will clearly eventually be obtained,
it is not clear that the resulting $U$ will be a subset of the skew shape $\rho/e$,
so the result $(S_U^\prime, T_U^\prime)$ may not be an element of $\bs{S}_{(e,d)}$. To address these concerns, let us make the following.

\begin{definition}[Spanning tree]
\label{def:SpanningTree}
Let $(S,T)\in\bs{S}_{(d,e)}$ and let $\{(B_k, C_k)\}_{1\leq k\leq M}$ be a sequence of pairs of positions
$B_k, C_k$ in the skew shape $\rho/d$ for some integer $M$.
Let $B_0 = (r, \rho_r)$, and for $K=0,\ldots,M$, let $U_K = \{B_0, B_1, \ldots, B_K\}$.
If the sequence $\{(B_k, C_k)\}_{1\leq k\leq M}$ needs to be specified, we denote
\[
    U_K\big(\{(B_k, C_k)\}_{1\leq k\leq M}\big) = \{B_0,B_1,\ldots,B_K\}.
\]

We say that $\{(B_k, C_k)\}_{1\leq k\leq M}$ is a \emph{spanning tree for $(S,T)$} if it satisfies the following requirements:
\begin{enumerate}
\item[(1)]  $B_k \in \rho/d \smallsetminus U_{k-1}$ and $C_k \in U_{k-1}$
            for each $k=1,\ldots,M$.
\item[(2)]  $B_k$ is adjacent to $C_k$ for $k=1,\ldots, M$, i.e., $C_k = B_k \pm (1,0)$ or $C_k = B_k \pm (0,1)$.
\item[(3)]  For $k=1,\ldots,M$, $S_{U_{k-1}}^\prime$ and $T_{U_{k-1}}^\prime$ are not
            both skew SSYT's, and $(B_k, C_k)$ provides a pair of adjacent positions at which one or both of
            $S_{U_{k-1}}^\prime$ and $T_{U_{k-1}}^\prime$ fail the skew SSYT requirements.
\item[(4)]  $\{(B_k, C_k)\}_{1\leq k\leq M}$ is maximal; i.e., there is no pair $(B_{M+1}, C_{M+1})$ of adjacent positions
            at which either $S_{U_M}^\prime$ or $T_{U_M}^\prime$ fail the skew SSYT requirements.
\end{enumerate}
We set $U = U\big(\{(B_k, C_k)\}_{1\leq k\leq M}\big) = U_M\big(\{(B_k, C_k)\}_{1\leq k\leq M}\big) = \{B_0,B_1,\ldots,B_M\}$.
\end{definition}

Note that Condition~(4) implies that $S_U^\prime$ and $T_U^\prime$ are skew SSYT's.
Note further that it is possible that $\emptyset$ is a spanning tree; this occurs when
$S_{\{(r,\rho_r)\}}^\prime$ and $T_{\{(r,\rho_r)\}}^\prime$ are skew SSYT's, where
$S_{\{(r,\rho_r)\}}^\prime$ is defined to be $T$ with a box added at position $(r,\rho_r)$ with label $S_{r,\rho_r}$ and
$T_{\{(r,\rho_r)\}}^\prime$ is defined to be $S$ with the box at position $(r,\rho_r)$ removed.

\begin{lemma}
\label{lem:UIndepSpanTree}
Let $(S,T)\in\bs{S}_{(d,e)}$.
The set $U\big(\{(B_k, C_k)\}_{1\leq k\leq M}\big) = \{B_0,B_1,\ldots,B_M\}$ is independent of the choice of spanning tree
$\{(B_k, C_k)\}_{1\leq k\leq M}$.
\end{lemma}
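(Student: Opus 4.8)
The plan is to reduce the independence of $U$ to a single \emph{persistence} principle: that whether a given edge is the site of a skew SSYT violation is determined entirely by the membership of its two endpoints in $U$, and neither by the rest of $U$ nor by the order in which $U$ was built. Once this is available, the proof becomes a short argument requiring no diamond-lemma or confluence machinery, because a boundary violation cannot be destroyed by additions elsewhere -- it disappears only when one of its own endpoints is absorbed into $U$.

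First I would record the bookkeeping underlying Notation~\ref{not:defS_U}. For each position $p\in\rho/d$ introduce extended values $\hat S_p$ and $\hat T_p$, equal to the genuine labels $S_p$, $T_p$ where a box is present and equal to $0$ (box treated as missing) otherwise, using the conventions that $T_{1,j}=0$ for $d<j\le e$, that the box at $(r,\rho_r)$ carries label $S_{r,\rho_r}$ in $S_U^\prime$, and that it is absent in $T_U^\prime$. Unwinding the definition of $S_U^\prime$ and $T_U^\prime$ then shows that the value of $S_U^\prime$ at $p$ equals $\hat S_p$ if $p\in U$ and $\hat T_p$ otherwise, while the value of $T_U^\prime$ at $p$ equals $\hat T_p$ if $p\in U$ and $\hat S_p$ otherwise. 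In particular, both the presence of a box at $p$ and, when present, its label are functions only of the single bit ``$p\in U$?''. The persistence lemma is then immediate: for a fixed edge $(B,C)$ of adjacent positions, whether $S_U^\prime$ (respectively $T_U^\prime$) has boxes at both $B$ and $C$ and fails the relevant skew SSYT inequality there depends only on the two bits ``$B\in U$?'' and ``$C\in U$?''. Hence if two sets $U$ and $W$ agree on the membership of $B$ and of $C$, then $S_U^\prime$ has a violation at $(B,C)$ exactly when $S_W^\prime$ does, and likewise for $T$.

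With persistence in hand I would prove $U\subseteq V$ for any two spanning trees $\{(B_k,C_k)\}$ (yielding $U$) and $\{(B_k^\ast,C_k^\ast)\}$ (yielding $V$), the reverse inclusion being symmetric. Both sequences begin with $B_0=(r,\rho_r)$, so $U_0=V_0=\{(r,\rho_r)\}$. Suppose $U\not\subseteq V$, and let $k\ge 1$ be least with $B_k\notin V$; then $U_{k-1}=\{B_0,\ldots,B_{k-1}\}\subseteq V$. By Condition~(3) of Definition~\ref{def:SpanningTree}, one of $S_{U_{k-1}}^\prime$, $T_{U_{k-1}}^\prime$ fails the skew SSYT requirements at the edge $(B_k,C_k)$, where $C_k\in U_{k-1}\subseteq V$ and $B_k\notin U_{k-1}$. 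Since also $B_k\notin V$, the edge $(B_k,C_k)$ has the same endpoint-membership pattern relative to $V$ as relative to $U_{k-1}$, so by persistence the corresponding tableau among $S_V^\prime$, $T_V^\prime$ also fails the requirements at $(B_k,C_k)$. This contradicts Condition~(4), which forces $S_V^\prime$ and $T_V^\prime$ to be skew SSYT's. Hence $U\subseteq V$, and by symmetry $U=V$.

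The main obstacle is the very first step: verifying carefully that presence-and-label at every position is genuinely a function of membership in $U$ alone, despite the shape changing between $\bs{S}_{(d,e)}$ and $\bs{S}_{(e,d)}$ -- the added box at $(r,\rho_r)$, the row-$1$ boxes in columns $d+1,\ldots,e$ that $T$ treats as $0$, and the possibility that $U\not\subseteq\rho/e$ (in which case some $S_{1,j}$ migrate into $S_U^\prime$). I expect this to require a short case check organized around the extended values $\hat S_p,\hat T_p$; once it is pinned down, the persistence lemma and the confluence argument above are purely formal. Notably, this argument never uses that $U\subseteq\rho/e$, so it establishes well-definedness of $U$ in full generality, leaving the containment $U\subseteq\rho/e$ to be handled separately.
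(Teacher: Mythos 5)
Your proof is correct and takes essentially the same route as the paper's: assume $U\not\subseteq V$, pick the least $k$ with $B_k\notin V$ (so $U_{k-1}\subseteq V$ and $C_k\in V$), and contradict the maximality Condition~(4) of the second spanning tree. The only difference is that you isolate as an explicit ``persistence'' lemma the locality fact---that a violation at an adjacent pair depends only on the membership of its two endpoints in the swap set---which the paper uses implicitly when it transfers the violation at $(B_{k_0},C_{k_0})$ from $S_{U_{k_0-1}}^\prime,T_{U_{k_0-1}}^\prime$ to $S_{U^\prime}^\prime,T_{U^\prime}^\prime$; making this explicit is a reasonable (arguably clarifying) refinement, not a different argument.
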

\begin{proof}
Fix $(S,T)\in\bs{S}_{(d,e)}$, let $\{(B_k, C_k)\}_{1\leq k\leq M}$ and $\{(B_k^\prime, C_k^\prime)\}_{1\leq k\leq M^\prime}$ be two choices of
spanning trees for $(S,T)$, and let $U = U\big(\{(B_k, C_k)\}_{1\leq k\leq M}\big)$ and
$U^\prime = U\big(\{(B_k^\prime, C_k^\prime)\}_{1\leq k\leq M^\prime}\big)$.
Assume $U\nsubseteq U^\prime$, and let $k_0$ be the smallest value of $k$ such that $B_k\notin U^\prime$. Then
$C_{k_0}\in U^\prime$ so that by Definition~\ref{def:SpanningTree}~(2) and (3), $(B_{k_0}, C_{k_0})$ is a pair of adjacent positions for which
one or both of $S_{U^\prime}^\prime$ and $T_{U^\prime}^\prime$ fail the skew SSYT requirements. It follows that
$\{(B_k^\prime, C_k^\prime)\}_{1\leq k\leq M^\prime}\cup\{(B_{k_0}, C_{k_0})\}$ satisfies Conditions~(1), (2), and (3), contradicting
Definition~\ref{def:SpanningTree}~(4) for $\{(B_k^\prime, C_k^\prime)\}_{1\leq k\leq M^\prime}$. Hence $U\subseteq U^\prime$,
and mutatis mutandis, $U^\prime\subseteq U$.
\end{proof}

\begin{notation}
\label{not:U(S,T)}
By Lemma~\ref{lem:UIndepSpanTree}, $U\big(\{(B_k, C_k)\}_{1\leq k\leq M}\big)$ is independent of the choice of spanning tree and depends only
on the pair $(S,T)\in\bs{S}_{(d,e)}$. Hence, we will hereafter denote this set $U(S,T)$.
\end{notation}

We now indicate that the set $U(S,T)$ is minimal among subsets $U^\ast$ of positions in $\rho/d$ that contain $(r,\rho_r)$ and such that
$S_{U^\ast}^\prime$ and $T_{U^\ast}^\prime$ are skew SSYT's.

\begin{lemma}
\label{lem:UMinimal}
Let $(S,T)\in\bs{S}_{(d,e)}$, and
let $U^\ast\subseteq\rho/d$ such that $(r,\rho_r)\in U^\ast$, and such that both $S_{U^\ast}^\prime$ and $T_{U^\ast}^\prime$ are skew SSYT's.
Then $U(S,T)\subseteq U^\ast$.
\end{lemma}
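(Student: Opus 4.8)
The plan is to mirror the minimality/maximality argument from the proof of Lemma~\ref{lem:UIndepSpanTree}, now comparing the spanning-tree set $U(S,T)$ against the arbitrary set $U^\ast$. First I would fix any spanning tree $\{(B_k, C_k)\}_{1\leq k\leq M}$ for $(S,T)$, so that $U(S,T) = \{B_0, B_1, \ldots, B_M\}$ with $B_0 = (r,\rho_r)$, and argue by contradiction: assuming $U(S,T)\nsubseteq U^\ast$, let $k_0$ be the smallest index with $B_{k_0}\notin U^\ast$. Since $B_0 = (r,\rho_r)\in U^\ast$ by hypothesis, we have $k_0\geq 1$, and by minimality of $k_0$ the set $U_{k_0-1} = \{B_0, \ldots, B_{k_0-1}\}$ is contained in $U^\ast$. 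Definition~\ref{def:SpanningTree}~(1) then gives $C_{k_0}\in U_{k_0-1}\subseteq U^\ast$ while $B_{k_0}\notin U^\ast$.

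The crux is the observation that the entry appearing at any position $(i,j)$ in $S_U^\prime$ (respectively $T_U^\prime$), including whether that position is occupied or is absent under the zero-label convention of Notation~\ref{not:defS_U}, is determined solely by whether $(i,j)\in U$ together with the fixed data $S$ and $T$: the entry is $S_{i,j}$ in $S_U^\prime$ and $T_{i,j}$ in $T_U^\prime$ when $(i,j)\in U$, and these roles are reversed when $(i,j)\notin U$. Because $B_{k_0}$ lies outside both $U_{k_0-1}$ and $U^\ast$, whereas $C_{k_0}$ lies inside both, the entries at the adjacent cells $B_{k_0}$ and $C_{k_0}$ agree between $S_{U_{k_0-1}}^\prime$ and $S_{U^\ast}^\prime$, and likewise agree between $T_{U_{k_0-1}}^\prime$ and $T_{U^\ast}^\prime$.

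I would then invoke Definition~\ref{def:SpanningTree}~(2) and (3): the pair $(B_{k_0}, C_{k_0})$ consists of adjacent positions at which $S_{U_{k_0-1}}^\prime$ or $T_{U_{k_0-1}}^\prime$ fails the skew SSYT requirements. Since the relevant entries coincide with those in $S_{U^\ast}^\prime$ or $T_{U^\ast}^\prime$, the identical failure occurs there, contradicting the hypothesis that both $S_{U^\ast}^\prime$ and $T_{U^\ast}^\prime$ are skew SSYT's. This forces $U(S,T)\subseteq U^\ast$, as desired.

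The step I expect to require the most care is the middle paragraph: one must verify that the comparison of the two cells $B_{k_0}$ and $C_{k_0}$ is genuinely insensitive to the rest of $U$, which means tracking the zero-label/missing-box bookkeeping of Notation~\ref{not:defS_U} carefully enough to be certain that a violation recorded at the stage $U_{k_0-1}$ transfers verbatim to the larger set $U^\ast$. Everything else is essentially a direct transcription of the contradiction appearing in Lemma~\ref{lem:UIndepSpanTree}.
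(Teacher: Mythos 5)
Your proof is correct and takes essentially the same approach as the paper's: fix a spanning tree, let $k_0$ be the least index with $B_{k_0}\notin U^\ast$, observe $C_{k_0}\in U_{k_0-1}\subseteq U^\ast$, and contradict Definition~\ref{def:SpanningTree}~(2)--(3) against the hypothesis that $S_{U^\ast}^\prime$ and $T_{U^\ast}^\prime$ are skew SSYT's. The only difference is that your middle paragraph spells out the transfer step (the entries at $B_{k_0}$ and $C_{k_0}$ depend only on membership in the swap set, so the violation recorded relative to $U_{k_0-1}$ persists relative to $U^\ast$), which the paper leaves implicit in its citation of Definition~\ref{def:SpanningTree}~(2) and (3).
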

\begin{proof}
Fix $(S,T)\in\bs{S}_{(d,e)}$ as well as a spanning tree $\{(B_k, C_k)\}_{1\leq k\leq M}$ for $(S, T)$, and then
$U(S,T) = U\big(\{(B_k, C_k)\}_{1\leq k\leq M}\big)$ by Lemma~\ref{lem:UIndepSpanTree}. Assume for contradiction that
$U(S,T)\nsubseteq U^\ast$, and let $k_0$ be the smallest value of $k$ such that $B_k\notin U^\ast$. Then
$C_{k_0}\in U^\ast$ so that by Definition~\ref{def:SpanningTree}~(2) and (3), $(B_{k_0}, C_{k_0})$ is a pair of adjacent positions for which
one or both of $S_{U^\ast}^\prime$ and $T_{U^\ast}^\prime$ fail the skew SSYT requirements. Then either $S_{U^\ast}^\prime$ or $T_{U^\ast}^\prime$
is not a skew SSYT, a contradiction.
\end{proof}

We now claim that $S_{U(S,T)}^\prime$ has shape $\rho/e$ and $T_{U(S,T)}^\prime$ has shape $\lambda/d$, i.e., the formation of
$(S_{U(S,T)}^\prime,T_{U(S,T)}^\prime)$ does not involve moving $S_{1,j}$ with $d< j\leq e$ from $S$ to $T$.

\begin{lemma}
\label{lem:UinT}
We have $U(S,T)\subseteq \rho/e$.
\end{lemma}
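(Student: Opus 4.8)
The plan is to invoke Lemma~\ref{lem:UIndepSpanTree} to fix an arbitrary spanning tree $\{(B_k,C_k)\}_{1\le k\le M}$ for $(S,T)$, and then to prove a statement stronger than $U(S,T)\subseteq\rho/e$ by induction on $K$, namely the conjunction of two assertions about each $U_K$: first, that $U_K\cap F=\emptyset$, where $F=\rho/d\setminus\rho/e$ is the set of first-row boxes $(1,j)$ with $d<j\le e$; and second, the \emph{comparison invariant} $S_{i,j}\le T_{i,j}$ for every $(i,j)\in U_K\setminus\{(r,\rho_r)\}$. The first assertion for $K=M$ is exactly the conclusion. The base case is immediate: $(r,\rho_r)\in\rho/e$ since the removed box lies in a row $r\ge 2$ when $r\neq 1$, and when $r=1$ its column satisfies $\rho_1=\lambda_1+1>e$, while the comparison is vacuous for $U_0=\{(r,\rho_r)\}$.

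The engine of the induction is that, granted the comparison on $U_{k-1}\setminus\{(r,\rho_r)\}$, the failing pair $(B_k,C_k)$ can only point in a direction consistent with that comparison. Recalling from Notation~\ref{not:defS_U} that $S_{U_{k-1}}^\prime$ carries $S$-entries on $U_{k-1}$ against $T$-entries outside, while $T_{U_{k-1}}^\prime$ carries $T$-entries on $U_{k-1}$ against $S$-entries outside, I would run a short case analysis over which of the two tableaux fails and over the position of $B_k$ relative to $C_k$. In each ``wrong-direction'' configuration, combining the inductive comparison at $C_k$ with the skew SSYT requirements (row- and column-monotonicity) for $S$ and for $T$ produces a contradiction, so such failures cannot arise; the surviving failures are precisely those for which the newly swapped entry satisfies $S_{B_k}\le T_{B_k}$, extending the invariant. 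The same mechanism forbids $B_k\in F$: a box of $F$ is empty in $S_{U_{k-1}}^\prime$ by the convention $T_{1,j}=0$ for $d<j\le e$, so it could only be forced in by a failure in $T_{U_{k-1}}^\prime$ against an adjacent $C_k$ lying below it or, at the right end, to its right, and in each case the comparison at $C_k$ together with the skew SSYT requirements for $S$ contradicts the putative failure.

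The part I expect to require the most care is the bookkeeping around the distinguished box $(r,\rho_r)$, for which no comparison inequality is available. Here I would use that $(r,\rho_r)$ is absent from $T_{U_{k-1}}^\prime$ and that its right and lower neighbors $(r,\rho_r+1)$ and $(r+1,\rho_r)$ lie outside $\lambda/e$ because $\lambda_r=\rho_r-1$, so that any failure adjacent to $(r,\rho_r)$ can only occur in $S_{U_{k-1}}^\prime$ at its upper or left neighbor, which are exactly the directions preserving the invariant. The genuinely delicate point is the case $r=1$, in which the left neighbor of $(r,\rho_r)=(1,\rho_1)$ may itself be the box $(1,e)\in F$; I would dispatch this by noting that $(1,e)$ is empty in $S_{U_{k-1}}^\prime$ while $(1,\rho_1)$ is absent from $T_{U_{k-1}}^\prime$, so no requirement is tested across that edge and $(1,e)$ is never drawn into $U$ through $(r,\rho_r)$. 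Assembling the induction over $k=1,\dots,M$ then gives $U(S,T)=U_M\subseteq\rho/e$.
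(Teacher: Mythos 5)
Your proof is correct, but it takes a genuinely different route from the paper's. The paper does not induct along the spanning tree at all: it defines an auxiliary set $U^\ast$ grown from $(r,\rho_r)$ by the local rule of adjoining an adjacent box $(i,j)$ whenever $S_{i,j} < T_{i,j}$, notes that $U^\ast\subseteq\rho/e$ is immediate from the convention $T_{1,j}=0$ for $d<j\leq e$, checks by a four-case boundary analysis that $(S_{U^\ast}^\prime, T_{U^\ast}^\prime)$ is a pair of skew SSYT's, and then invokes the minimality Lemma~\ref{lem:UMinimal} to conclude $U(S,T)\subseteq U^\ast\subseteq\rho/e$. You instead prove the containment directly by induction along a spanning tree, carrying the comparison invariant $S_{i,j}\leq T_{i,j}$ on $U_K\smallsetminus\{(r,\rho_r)\}$ together with $U_K\cap F=\emptyset$; I checked that your case analysis closes: in each adjacency configuration a wrong-direction failure contradicts the invariant combined with the row/column monotonicity of $S$ and $T$, the surviving failures force $S_{B_k}< T_{B_k}$, boxes of $F$ are empty in $S_{U_{k-1}}^\prime$ and any failure in $T_{U_{k-1}}^\prime$ against a $C_k$ below or at the right end is contradicted by the invariant, and the edge between $(1,e)$ and $(1,\rho_1)$ when $r=1$ tests nothing since each endpoint is missing from one of the two tableaux. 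The trade-off: the paper's argument is modular --- the boundary check concerns a statically defined set, Lemma~\ref{lem:UMinimal} supplies the inductive work once and for all, and $U^\ast$ is reused in Example~\ref{ex:U*NotInject} --- whereas yours is self-contained within the spanning-tree formalism, avoids Lemma~\ref{lem:UMinimal} entirely, and proves something stronger (every box of $U(S,T)$ other than $(r,\rho_r)$ satisfies $S_{i,j}\leq T_{i,j}$, which in particular recovers $U(S,T)\subseteq U^\ast$), at the cost of a heavier case analysis spread over two tableaux, four directions, and the distinguished box.

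One imprecision to repair: you argue that $(r,\rho_r+1)$ and $(r+1,\rho_r)$ cannot trigger failures because they lie outside $\lambda/e$. What you actually need --- and what holds, since $\rho_{r+1}=\lambda_{r+1}\leq\lambda_r=\rho_r-1$ --- is that they lie outside $\rho/d$ (indeed outside $\rho$), so they are not admissible positions $B_k$ in the sense of Definition~\ref{def:SpanningTree} and carry no entry in either tableau. Lying outside $\lambda/e$ alone would not disqualify them: the boxes of $F$ also lie outside $\lambda/e$, yet they are exactly the candidates your argument must work to exclude.
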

\begin{proof}
Define a subset $U^\ast=U^\ast(S,T)\subseteq\rho/d$ recursively as follows. As the initial condition, let $(r,\rho_r)\in U^\ast$. If there exists an
$(i,j)\notin U^\ast$ adjacent to an element of $U^\ast$, i.e., $(i\pm 1, j)\in U^\ast$ or $(i, j\pm 1)\in U^\ast$, such that
$S_{i,j} < T_{i,j}$,
then add $(i,j)$ to $U^\ast$, and continue until there is no such $(i,j)$. The process clearly terminates by finiteness.
See Example~\ref{ex:U*NotInject} for an illustration of the set $U^\ast$.

Observe that $U^\ast\subseteq \rho/e$,
because for all $j$ such that $d < j \leq e$, we have $S_{1,j} > 0 = T_{1,j}$, see Notation~\ref{not:defS_U}, so that $(1,j)\notin U^\ast$.
We claim further that $(S_{U^\ast}^\prime, T_{U^\ast}^\prime)\in\bs{S}_{(e,d)}$, i.e., $S_{U^\ast}^\prime$ and $T_{U^\ast}^\prime$ are skew SSYT's.
To see this, we check that each column is increasing, and each row is non-decreasing, at the boundary of $U^\ast$.

Suppose $(i,j)\notin U^\ast$ and $(i+1,j)\in U^\ast$. The first condition implies that
$T_{i,j}\leq S_{i,j}$, while the second implies
$S_{i+1,j} < T_{i+1,j}$. This along with the adjacency of these two positions implies that
$S_{i,j} < S_{i+1,j} < T_{i+1,j}$, and
$T_{i,j} \leq S_{i,j} < S_{i+1,j}$,
so that the $j$th column of both $S_{U^\ast}^\prime$ and $T_{U^\ast}^\prime$ is increasing from row $i$ to row $i+1$.

Suppose $(i,j)\in U^\ast$ and $(i+1,j)\notin U^\ast$. It follows that
$S_{i,j} < T_{i,j}$ and
$T_{i+1,j} \leq S_{i+1,j}$. Then
$T_{i,j} < T_{i+1,j} \leq S_{i+1,j}$, and
$S_{i,j} < T_{i,j} < T_{i+1,j}$
so that both $S_{U^\ast}^\prime$ and $T_{U^\ast}^\prime$ increase from
$(i,j)$ to $(i+1,j)$.

If $(i,j)\notin U^\ast$ and $(i,j+1)\in U^\ast$, then
$T_{i,j} \leq S_{i,j}$ and
$S_{i,j+1} < T_{i,j+1}$. Hence
$S_{i,j} \leq S_{i,j+1} < T_{i,j+1}$, and
$T_{i,j} \leq S_{i,j} \leq S_{i,j+1}$ so that $S_{U^\ast}^\prime$ and $T_{U^\ast}^\prime$ are non-decreasing from
$(i,j)$ to $(i,j+1)$.

If $(i,j)\in U^\ast$ and $(i,j+1)\notin U^\ast$, then
$S_{i,j} < T_{i,j}$ and
$T_{i,j+1} \leq S_{i,j+1}$. Hence
$T_{i,j} \leq T_{i,j+1} \leq S_{i,j+1}$,
$S_{i,j} < T_{i,j} \leq T_{i,j+1}$,
and $S_{U^\ast}^\prime$ and $T_{U^\ast}^\prime$ are non-decreasing from $(i,j)$ to $(i,j+1)$.

It follows that at every position on the boundary of $U^\ast$, the skew SSYT requirements are satisfied so that
$(S_{U^\ast}^\prime, T_{U^\ast}^\prime)\in\bs{S}_{(e,d)}$.
By Lemma~\ref{lem:UMinimal}, we then have that $U(S,T)\subseteq U^\ast\subseteq\rho/e$, completing the proof.
\end{proof}

Given the results of Lemmas~\ref{lem:UIndepSpanTree} and \ref{lem:UinT}, we now have a well-defined map
$\varphi\colon\bs{S}_{(d,e)}\to\bs{S}_{(e,d)}$ given by $\varphi(S,T) = (S_{U(S,T)}^\prime, T_{U(S,T)}^\prime)$.
At this point, the reader may wonder why we do not use the set $U^\ast(S,T)$ as defined constructively in the proof of
Lemma~\ref{lem:UinT} in place of $U(S,T)$. While this would as well yield a well-defined map $\bs{S}_{(d,e)}\to\bs{S}_{(e,d)}$,
the map would not be injective as illustrated by the following.

\begin{example}
\label{ex:U*NotInject}
Let $\rho = (5,4)$, $\lambda = (4,4)$, $d = 1$, and $e = 2$, and let $(S_1,T_1), (S_2, T_2)\in\bs{S}_{(d,e)}$ with $S_i$ and $T_i$ the
skew SSYT's pictured in Figure~\ref{fig:ExNotInj}. The sets $U^\ast(S_i,T_i)$, $i=1,2$, described in the proof of Lemma~\ref{lem:UinT} are shaded in each pair.
The pair
\[
    \big((S_1)_{U^\ast(S_1,T_1)}^\prime,(T_1)_{U^\ast(S_1,T_1)}^\prime\big) = \big((S_2)_{U^\ast(S_2,T_2)}^\prime,(T_2)_{U^\ast(S_2,T_2)}^\prime\big)
\]
is pictured as well, indicating that the assignment
$(S,T)\mapsto(S_{U^\ast(S,T)}^\prime,T_{U^\ast(S,T)}^\prime)$, using the definition of $U^\ast(S, T)$ in Lemma~\ref{lem:UinT}, is not injective.

\begin{figure}[!htb]
\centering
\begin{subfigure}{.4\textwidth}
\centering
\ytableausetup{mathmode, boxframe=normal, boxsize=1.7em}
\begin{ytableau}
\none &  2   &   *(lightgray)  2  &   *(lightgray)  3  &   *(lightgray)  3
\\
 2  &   3   &    4  &    5
\end{ytableau}
\caption*{$S_1$}
\end{subfigure}
\begin{subfigure}{.4\textwidth}
\centering
\ytableausetup{mathmode, boxframe=normal, boxsize=1.7em}
\begin{ytableau}
\none & \none &   3  &  4
\\
 2  &    3   &    4  &  5
\end{ytableau}
\caption*{$T_1$}
\end{subfigure}

\bigskip\bigskip

\begin{subfigure}{.4\textwidth}
\centering
\ytableausetup{mathmode, boxframe=normal, boxsize=1.7em}
\begin{ytableau}
\none &  2   &   3  &   *(lightgray)  3  &   *(lightgray)  3
\\
 2  &   3   &    4  &    5
\end{ytableau}
\caption*{$S_2$}
\end{subfigure}
\begin{subfigure}{.4\textwidth}
\centering
\ytableausetup{mathmode, boxframe=normal, boxsize=1.7em}
\begin{ytableau}
\none & \none &   2  &  4
\\
 2  &    3   &    4  &  5
\end{ytableau}
\caption*{$T_2$}
\end{subfigure}

\bigskip\bigskip

\begin{subfigure}{.4\textwidth}
\centering
\ytableausetup{mathmode, boxframe=normal, boxsize=1.7em}
\begin{ytableau}
\none &  \none   &  2  &  3  &  3
\\
 2  &   3   &    4  &    5
\end{ytableau}
\caption*{$(S_1)_{U^\ast(S_1,T_1)}^\prime = (S_2)_{U^\ast(S_2,T_2)}^\prime$}
\end{subfigure}
\begin{subfigure}{.4\textwidth}
\centering
\ytableausetup{mathmode, boxframe=normal, boxsize=1.7em}
\begin{ytableau}
\none &  2   &    3  &  4
\\
 2  &    3   &    4  &  5
\end{ytableau}
\caption*{$(T_1)_{U^\ast(S_1,T_1)}^\prime = (T_2)_{U^\ast(S_2,T_2)}^\prime$}
\end{subfigure}
\caption{The skew SSYT pairs $(S_1, T_1)$ and $(S_2, T_2)$ as well as the pair
$\big((S_1)_{U^\ast(S_1,T_1)}^\prime,(T_1)_{U^\ast(S_1,T_1)}^\prime\big) = \big((S_2)_{U^\ast(S_2,T_2)}^\prime,(T_2)_{U^\ast(S_2,T_2)}^\prime\big)$
discussed in Example~\ref{ex:U*NotInject}.}
\label{fig:ExNotInj}
\end{figure}
\end{example}

Using the set $U(S,T)$ defined in terms of a spanning tree, the situation in Example~\ref{ex:U*NotInject} does not occur, and the map $\varphi$
is injective, which we now demonstrate.

\begin{proposition}
\label{prop:Inject}
The function $\varphi\colon\bs{S}_{(d,e)}\to\bs{S}_{(e,d)}$ given by $\varphi(S,T) = (S_{U(S,T)}^\prime, T_{U(S,T)}^\prime)$
is injective.
\end{proposition}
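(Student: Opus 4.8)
The plan is to show that $\varphi$ is injective by proving that a pair $(S,T)$ can be reconstructed entirely from its image, through the set $U(S,T)$. The first observation is that the construction in Notation~\ref{not:defS_U} inverts on each box individually: writing $(S',T')=\varphi(S,T)$ and $U=U(S,T)$, the definitions give $S_{i,j}=S'_{i,j}$ when $(i,j)\in U$ and $S_{i,j}=T'_{i,j}$ when $(i,j)\notin U$, since $S'$ agrees with $S$ on $U$ and with $T$ off $U$; dually $T_{i,j}=T'_{i,j}$ for $(i,j)\in U\smallsetminus\{(r,\rho_r)\}$ and $T_{i,j}=S'_{i,j}$ otherwise. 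Thus the entries of $(S,T)$ are determined by $(S',T')$ together with the single bit of whether each position lies in $U$. Consequently it suffices to prove that $U(S,T)$ itself is determined by $(S',T')$: if $\varphi(S_1,T_1)=\varphi(S_2,T_2)=(S',T')$ with sets $U_1=U(S_1,T_1)$ and $U_2=U(S_2,T_2)$, and we can show $U_1=U_2$, then these reconstruction identities force $(S_1,T_1)=(S_2,T_2)$.

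I would first reduce to proving $U_1\subseteq U_2$, the reverse inclusion following by symmetry. Fixing a spanning tree $\{(B_k,C_k)\}_{1\leq k\leq M}$ for $(S_1,T_1)$ as in Definition~\ref{def:SpanningTree}, argue by contradiction: if $U_1\nsubseteq U_2$, let $k_0$ be least with $B_{k_0}\notin U_2$, so that $C_{k_0}\in U_{k_0-1}\subseteq U_2$. The strategy is to show that $(B_{k_0},C_{k_0})$ is a pair of adjacent positions at which $S_2$ or $T_2$ already fails the skew SSYT requirements, contradicting $(S_2,T_2)\in\bs{S}_{(d,e)}$. This mirrors the mechanism of Lemma~\ref{lem:UMinimal}; the crucial point is that the entry of $S_2$ (resp.\ $T_2$) at a position depends only on whether that position lies in $U_2$. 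Applying the reconstruction identities to $(S_2,T_2)$ gives, at $C_{k_0}\in U_2$ and $B_{k_0}\notin U_2$, exactly the entries $S'_{C_{k_0}}$ and $T'_{B_{k_0}}$ (in the numerator tableau) that occur for the partially swapped tableau $(S_1)_{U_{k_0-1}}^\prime$ at those positions, because $C_{k_0}\in U_{k_0-1}\subseteq U_1$ and $B_{k_0}\in U_1\smallsetminus\{(r,\rho_r)\}$ force $S'_{C_{k_0}}=(S_1)_{C_{k_0}}$ and $T'_{B_{k_0}}=(T_1)_{B_{k_0}}$. Since Definition~\ref{def:SpanningTree}~(3) guarantees a failure of the skew SSYT requirements at $(B_{k_0},C_{k_0})$ for $(S_1)_{U_{k_0-1}}^\prime$ or $(T_1)_{U_{k_0-1}}^\prime$, the identical entries produce the same failure in $(S_2,T_2)$.

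Two bookkeeping points must be verified to make this transfer legitimate. Whichever tableau carries the failure, the positions $B_{k_0}$ and $C_{k_0}$ must be genuinely present in the relevant shape for $(S_2,T_2)$: a failure in the numerator tableau $(S_1)_{U_{k_0-1}}^\prime$ of shape $\rho/e$ transfers to $S_2$ of shape $\rho/d\supseteq\rho/e$, while a failure in the denominator tableau $(T_1)_{U_{k_0-1}}^\prime$ of shape $\lambda/d$ transfers to $T_2$ of shape $\lambda/e$. Here I would invoke Lemma~\ref{lem:UinT} to ensure $B_{k_0},C_{k_0}\in\rho/e$, so any such position lying in the first row has column index exceeding $e$ and hence survives in $\lambda/e$; the case $C_{k_0}=(r,\rho_r)$ is handled separately, since then the failure necessarily occurs in the numerator tableau. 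Granting these, the contradiction establishes $U_1\subseteq U_2$, hence $U_1=U_2$, and the reconstruction identities complete the proof.

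The step I expect to be the main obstacle is precisely this entry-matching argument, namely verifying that the failure detected by the spanning tree of $(S_1,T_1)$ at the intermediate stage $U_{k_0-1}$ reappears verbatim in the \emph{fully} swapped pair $(S_2,T_2)$. The delicacy is that $(S_1)_{U_{k_0-1}}^\prime$ and $S_2$ are genuinely different tableaux of different shapes, and only the two entries at the boundary pair $(B_{k_0},C_{k_0})$ are guaranteed to coincide. This is exactly the feature that fails for the naive set $U^\ast$ of Lemma~\ref{lem:UinT} and produces the collision in Example~\ref{ex:U*NotInject}, so the argument must use the minimality encoded in the spanning-tree definition of $U(S,T)$ in an essential way.
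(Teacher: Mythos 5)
Your proof is correct and takes essentially the same route as the paper's: both reduce injectivity to showing $U(S_1,T_1)=U(S_2,T_2)$ via the reconstruction of $(S,T)$ from $(\varphi(S,T),U)$, pick the least index $k_0$ with $B_{k_0}\notin U_2$, and exploit the entry identities at the adjacent pair $(B_{k_0},C_{k_0})$ forced by $\varphi(S_1,T_1)=\varphi(S_2,T_2)$. The only difference is a contrapositive rearrangement: the paper combines those identities with the skew SSYT property of $(S_2,T_2)$ to show the requirements actually hold at $(B_{k_0},C_{k_0})$ in the partially swapped tableaux, contradicting Definition~\ref{def:SpanningTree}~(3), whereas you transfer the failure guaranteed by (3) into $S_2$ or $T_2$ itself, contradicting $(S_2,T_2)\in\bs{S}_{(d,e)}$.
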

\begin{proof}
Let $(S, T), (S^\ast, T^\ast)\in\bs{S}_{(d,e)}$ such that $\varphi(S,T) = \varphi(S^\ast, T^\ast)$.
Let $U = U(S, T)$ and $U^\ast = U(S^\ast, T^\ast)$, and let $\{(B_k,C_k)\}_{1\leq k\leq M}$ and $\{(B_k^\ast,C_k^\ast)\}_{1\leq k\leq M^\ast}$
be choices of spanning trees for $(S, T)$ and $(S^\ast, T^\ast)$, respectively. Assume for contradiction that $U\nsubseteq U^\ast$,
let $k$ be the smallest value such that $B_k\notin U^\ast$, and then $C_k\in U^\ast$. Let $B_k = (i,j)$ and observe the following.
\begin{enumerate}
\item[\textit{i.}]
As
$\varphi(S,T) = \varphi(S^\ast, T^\ast)$ and $B_k = (i,j)\in U\smallsetminus U^\ast$, we have
$S_{i,j} = T_{i,j}^\ast$ and
$T_{i,j} = S_{i,j}^\ast$.
\end{enumerate}
There are four cases to consider for $C_k$ with similar arguments for each.

For the first case, assume $C_k = (i+1,j)$.
\begin{enumerate}
\item[\textit{ii.}]
As
$\varphi(S,T) = \varphi(S^\ast, T^\ast)$ and $C_k = (i+1,j)\in U\cap U^\ast$, we have
$S_{i+1,j} = S_{i+1,j}^\ast$ and
$T_{i+1,j} = T_{i+1,j}^\ast$.
\item[\textit{iii.}]
As $(S^\ast, T^\ast)\in\bs{S}_{(d,e)}$, we have
$S_{i+1,j}^\ast > S_{i,j}^\ast$ and
$T_{i+1,j}^\ast > T_{i,j}^\ast$.
\end{enumerate}
Combining \textit{i}, \textit{ii.}, and \textit{iii.}, it follows that
$S_{i+1,j} = S_{i+1,j}^\ast > S_{i,j}^\ast = T_{i,j}$ and
$T_{i+1,j} = T_{i+1,j}^\ast > T_{i,j}^\ast = S_{i,j}$,
contradicting the fact that $(B_k, C_k)$ provides a pair of adjacent positions at which one or both of
$S_{U_{k-1}}^\prime$ and $T_{U_{k-1}}^\prime$ fail the skew SSYT requirements.

For the second case, assume $C_k = (i,j+1)$.
\begin{enumerate}
\item[\textit{ii.$^\prime$}]
As
$\varphi(S,T) = \varphi(S^\ast, T^\ast)$ and $C_k = (i,j+1)\in U\cap U^\ast$, we have
$S_{i,j+1} = S_{i,j+1}^\ast$ and
$T_{i,j+1} = T_{i,j+1}^\ast$.
\item[\textit{iii.$^\prime$}]
As $(S^\ast, T^\ast)\in\bs{S}_{(d,e)}$, we have
$S_{i,j+1}^\ast \geq S_{i,j}^\ast$ and
$T_{i,j+1}^\ast \geq T_{i,j}^\ast$.
\end{enumerate}
Combining \textit{i.}, \textit{ii.$^\prime$}, and \textit{iii.$^\prime$}, we have
$S_{i,j+1} = S_{i,j+1}^\ast\geq S_{i,j}^\ast = T_{i,j}$
and
$T_{i,j+1} = T_{i,j+1}^\ast\geq T_{i,j}^\ast = S_{i,j}$,
contradicting that $(B_k, C_k)$ is a pair of adjacent positions at which one or both of
$S_{U_{k-1}}^\prime$ and $T_{U_{k-1}}^\prime$ fail the skew SSYT requirements.

For the third case, assume $C_k = (i-1,j)$.
\begin{enumerate}
\item[\textit{ii.$^{\prime\prime}$}]
As
$\varphi(S,T) = \varphi(S^\ast, T^\ast)$ and $C_k = (i-1,j)\in U\cap U^\ast$, we have
$S_{i-1,j} = S_{i-1,j}^\ast$ and
$T_{i-1,j} = T_{i-1,j}^\ast$.
\item[\textit{iii.$^{\prime\prime}$}]
As $(S^\ast, T^\ast)\in\bs{S}_{(d,e)}$, we have
$S_{i-1,j}^\ast < S_{i,j}^\ast$ and
$T_{i-1,j}^\ast < T_{i,j}^\ast$.
\end{enumerate}
Combining \textit{i.}, \textit{ii.$^{\prime\prime}$}, and \textit{iii.$^{\prime\prime}$}, we have
$S_{i-1,j} = S_{i-1,j}^\ast < S_{i,j}^\ast = T_{i,j}$ and
$T_{i-1,j} = T_{i-1,j}^\ast < T_{i,j}^\ast = S_{i,j}$,
contradicting the failure of the skew SSYT requirements for
$S_{U_{k-1}}^\prime$ and $T_{U_{k-1}}^\prime$ at positions $(i-1,j)$ and $(i,j)$.

For the final case, assume $C_k = (i,j-1)$.
\begin{enumerate}
\item[\textit{ii.$^{\prime\prime\prime}$}]
As
$\varphi(S,T) = \varphi(S^\ast, T^\ast)$ and $C_k = (i,j-1)\in U\cap U^\ast$, we have
$S_{i,j-1} = S_{i,j-1}^\ast$ and
$T_{i,j-1} = T_{i,j-1}^\ast$.
\item[\textit{iii.$^{\prime\prime\prime}$}]
As $(S^\ast, T^\ast)\in\bs{S}_{(d,e)}$, we have
$S_{i,j-1}^\ast \leq S_{i,j}^\ast$ and
$T_{i,j-1}^\ast \leq T_{i,j}^\ast$.
\end{enumerate}
Combining \textit{i.}, \textit{ii.$^{\prime\prime\prime}$}, and \textit{iii.$^{\prime\prime\prime}$},
$S_{i,j-1} = S_{i,j-1}^\ast \leq S_{i,j}^\ast = T_{i,j}$ and
$T_{i,j-1} = T_{i,j-1}^\ast \leq T_{i,j}^\ast = S_{i,j}$,
contradicting the failure of the skew SSYT requirements for
$S_{U_{k-1}}^\prime$ and $T_{U_{k-1}}^\prime$ at $(i,j-1)$ and $(i,j)$.

In each case, we have a contradiction, implying that $U\subseteq U^\ast$, and mutatis mutandis, $U^\ast\subseteq U$,
so that $U = U^\ast$. Along with $\varphi(S,T) = \varphi(S^\ast, T^\ast)$, this implies that
$(S, T) = (S^\ast, T^\ast)$, completing the proof.
\end{proof}

\begin{remark}
\label{rem:PhiSimpleCase}
By Lemma~\ref{lem:ProofSimpleCase}, when $N = n+1$ and no $\lambda_i$ nor $\rho_i$ are zero, $\bs{S}_{(0,1)}$ and $\bs{S}_{(1,0)}$ are in
bijective correspondence. As $\varphi$ is injective, it follows that $\varphi\colon\bs{S}_{(0,1)}\to\bs{S}_{(1,0)}$ is a bijection in this case.
\end{remark}

By Proposition~\ref{prop:Inject}, it follows that whenever $e \neq d$, $\bs{P}(d,e) + \bs{P}(e,d)$ consists of nonpositive terms;
see Notation~\ref{not:S(d,e)P(d,e)}. Hence, each positive term in the numerator of the right side of Equation~\eqref{eq:PartDerivAsSkew},
i.e., terms such that $e > d$, is cancelled by a negative term with the values of $e$ and $d$ exchanged. We therefore have the following.

\begin{theorem}
\label{thrm:PartDerDecOneBox}
Let $\bs{x} = (x_1,x_2,\ldots,x_N)$ be a set of variables, and let $\lambda = (\lambda_1,\lambda_2,\ldots,\lambda_n)$
and $\rho = (\rho_1,\rho_2,\ldots,\rho_n)$ be partitions such that
$\lambda$ is given by $\rho$ with a single box removed, i.e., there is an $r$ such that $\lambda_i = \rho_i$
for $i\neq r$ and $\lambda_r = \rho_r - 1$.
Then for positive values of the $x_i$, $s_{\lambda}(\bs{x})/s_{\rho}(\bs{x})$ is strictly decreasing in
each variable. More strongly, for each $j$, the partial derivative
\begin{equation}
\label{eq:PartDerivOneBox}
    \frac{\partial}{\partial x_j}\left(\frac{s_{\lambda}(\bs{x})}{s_{\rho}(\bs{x})}\right)
\end{equation}
can be expressed as the ratio of polynomials where the coefficients of the denominator are nonnegative and the coefficients of the
numerator are nonpositive.
\end{theorem}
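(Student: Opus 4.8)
The plan is to assemble the ingredients already established, reducing everything to the monomial-preserving injection of Proposition~\ref{prop:Inject}. By the symmetry of $s_\lambda$ and $s_\rho$ noted at the start of this section, it suffices to treat the partial derivative with respect to $x_N$. Corollary~\ref{cor:PartDerivAsSkew} expresses this derivative as the single fraction in Equation~\eqref{eq:PartDerivAsSkew}, whose denominator is $s_\rho(\bs{x})^2$. Since $s_\rho(\bs{x})$ is a sum of monic monomials, its square is a polynomial with nonnegative integer coefficients, which settles the denominator half of the claim at once. It therefore remains to show that the numerator $\sum_{d,e}\bs{P}(d,e)$, in the notation of \eqref{eq:defP(d,e)}, is a polynomial all of whose coefficients are nonpositive.

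To that end, I would split the numerator into the vanishing diagonal $d=e$, the pairs $\bs{P}(d,e)+\bs{P}(e,d)$ with $0\le d<e\le\lambda_1$, and the individual summands with $\lambda_1<d\le\rho_1$ (for which necessarily $e\le\lambda_1<d$). A direct computation from \eqref{eq:defP(d,e)} gives
\[
\bs{P}(d,e)+\bs{P}(e,d)
=(e-d)\,x_N^{\,d+e-1}\bigl(s_{\rho/d}(\bs{x}_{N-1})\,s_{\lambda/e}(\bs{x}_{N-1})-s_{\rho/e}(\bs{x}_{N-1})\,s_{\lambda/d}(\bs{x}_{N-1})\bigr).
\]
Expanding each skew Schur polynomial over skew SSYT's identifies the two products with the generating functions $\sum_{(S,T)\in\bs{S}_{(d,e)}}\bs{x}_{N-1}^{(S,T)}$ and $\sum_{(S,T)\in\bs{S}_{(e,d)}}\bs{x}_{N-1}^{(S,T)}$ of Notation~\ref{not:S(d,e)P(d,e)}. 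Since the injection $\varphi\colon\bs{S}_{(d,e)}\to\bs{S}_{(e,d)}$ of Proposition~\ref{prop:Inject} preserves $\bs{x}_{N-1}^{(S,T)}$, every monomial appears in the second sum at least as often as in the first, so the parenthesized difference is monomial-nonpositive; multiplying by the positive factor $(e-d)\,x_N^{d+e-1}$ keeps each such pair nonpositive. Each leftover summand has coefficient $(e-d)<0$ times the monomial-nonnegative product $x_N^{d+e-1}s_{\rho/d}(\bs{x}_{N-1})s_{\lambda/e}(\bs{x}_{N-1})$ and is hence also nonpositive. This shows the numerator has nonpositive coefficients, which is the ``more strongly'' assertion.

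It remains to upgrade this to strict monotonicity, and the point I would make is simply that the numerator is not the zero polynomial: being a sum of monomials with nonpositive coefficients, it is then strictly negative at every positive point, forcing $\partial_{x_N}(s_\lambda/s_\rho)<0$ there. To see it cannot vanish identically, I would use that $f=s_\lambda(\bs{x})/s_\rho(\bs{x})$ is homogeneous of degree $-1$, since exactly one box is removed, and is positive for positive $x_i$. Euler's identity gives $\sum_{j}x_j\,\partial_{x_j}f=-f<0$ at any positive point, so the partial derivatives cannot all vanish there; and because $f$ is a symmetric function, the numerator of $\partial_{x_j}f$ over the common denominator $s_\rho(\bs{x})^2$ is obtained from that of $\partial_{x_N}f$ by interchanging $x_j$ and $x_N$. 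Thus one of these numerators is nonzero if and only if all are, so each $\partial_{x_j}f$ is strictly negative on the positive orthant, giving strict decrease in every variable.

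The genuinely hard step is Proposition~\ref{prop:Inject}, which is already available; once the injection is in hand, the nonpositivity of the numerator is bookkeeping. The only new subtlety in this theorem is the strictness claim, and I expect that to be the main point to get right: rather than arguing case-by-case about which pairs $(d,e)$ leave an uncancelled negative term (which depends, for instance, on whether $r=1$), the homogeneity-plus-symmetry argument through Euler's identity disposes of non-vanishing of the numerator uniformly.
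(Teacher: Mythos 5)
Your proof is correct and follows the paper's own route: reduce to $j=N$ by symmetry, invoke Corollary~\ref{cor:PartDerivAsSkew} to write the derivative over the denominator $s_\rho(\bs{x})^2$, and use the monomial-preserving injection $\varphi\colon\bs{S}_{(d,e)}\to\bs{S}_{(e,d)}$ of Proposition~\ref{prop:Inject} to conclude that each pair $\bs{P}(d,e)+\bs{P}(e,d)$ with $d<e$ has nonpositive coefficients, the leftover terms with $e\le\lambda_1<d\le\rho_1$ being nonpositive on their own. This is exactly the paper's argument, stated there in one sentence after Proposition~\ref{prop:Inject}. Where you go beyond the paper is the strictness claim: the paper presents strict decrease as if it were immediate from the sign pattern of the numerator, but, as you correctly note, nonpositive coefficients alone only yield that the ratio is non-increasing; one must also rule out that the numerator vanishes identically. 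Your argument for this---$f=s_\lambda/s_\rho$ is homogeneous of degree $-1$ and positive on the positive orthant, so Euler's identity gives $\sum_j x_j\,\partial_{x_j}f=-f<0$ there, hence the numerator of some $\partial_{x_j}f$ over $s_\rho(\bs{x})^2$ is a nonzero polynomial, and by the symmetry of $f$ and of $s_\rho^2$ all of these numerators are obtained from one another by permuting variables, so all are nonzero; a nonzero polynomial with nonpositive coefficients is strictly negative at every positive point---is correct, and it settles the non-vanishing uniformly rather than by a case analysis on the position $r$ of the removed box. This is a modest but genuine tightening of the paper's write-up rather than a different proof.
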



\subsection{The general case}
\label{subsec:ProofGeneralCase}

We now drop the assumption that $\lambda$ and $\rho$ differ by a single box and return to the general case where
$\lambda = (\lambda_1,\lambda_2,\ldots,\lambda_n)$ and $\rho = (\rho_1,\rho_2,\ldots,\rho_n)$ are partitions such
that $\lambda_i \leq \rho_i$ for each $i$ and $\lambda_i < \rho_i$ for at least one $i$. The proof
of Theorem~\ref{mainthm:PartDerDec} is a quick application of Theorem~\ref{thrm:PartDerDecOneBox}.

\begin{proof}[Proof of Theorem~\ref{mainthm:PartDerDec}]
Let $\{ \mu^i \}_{i=0}^r$ be a sequence of partitions such that $\mu^0 = \rho$, $\mu^r = \lambda$, and $\mu^{i+1}$ is formed from
$\mu^{i}$ by removing a single box, i.e., $\mu^{i+1}$ and $\mu^{i}$ satisfy the hypotheses of Theorem~\ref{thrm:PartDerDecOneBox},
for $i=0,1,\ldots,r-1$. Then
\[
    \frac{s_{\lambda}(\bs{x})}{s_{\rho}(\bs{x})}
    =
    \prod\limits_{i=0}^{r-1} \frac{s_{\mu^{i+1}}(\bs{x})}{s_{\mu^i}(\bs{x})}
\]
so that for any $j = 1, 2, \ldots, N$,
\[
    \frac{\partial}{\partial x_j} \left( \prod\limits_{i=0}^{r-1} \frac{s_{\mu^{i+1}}(\bs{x})}{s_{\mu^i}(\bs{x})} \right)
    =
    \sum\limits_{i=0}^{r-1}
        \prod\limits_{\substack{k=0 \\ k\neq i}}^{r-1} \frac{s_{\mu^{k+1}}(\bs{x})}{s_{\mu^k}(\bs{x})}
        \frac{\partial}{\partial x_j} \left(\frac{s_{\mu^{i+1}}(\bs{x})}{s_{\mu^i}(\bs{x})}\right).
\]
By Theorem~\ref{thrm:PartDerDecOneBox}, each $\frac{\partial}{\partial x_j} \left(\frac{s_{\mu^{i+1}}(\bs{x})}{s_{\mu^i}(\bs{x})}\right)$
can be expressed as the ratio of polynomials where the coefficients of the denominator are nonnegative and the coefficients of the
numerator are nonpositive. As $s_{\mu^k}(\bs{x})$ and $s_{\mu^{k+1}}(\bs{x})$ are sums of positive terms, the result follows.
\end{proof}


\section{An application to the Hilbert series of linear symplectic quotients}
\label{sec:SympQuot}

In this section, we discuss the question of distinguishing between linear symplectic quotients using the first Laurent coefficient of the Hilbert series,
including an application of Theorem~\ref{mainthm:PartDerDec}.

If $G$ is a compact Lie group and $V$ is a unitary representation of $G$ considered as the underlying real symplectic vector space, then there is a
\emph{moment map} $\mu\colon V\to \mathfrak{g}^\ast$ where $\mathfrak{g}^\ast$ is the dual of the Lie algebra $\mathfrak{g}$ of $G$, which can
be taken to be homogeneous quadratic. The \emph{shell} $Z = \mu^{-1}(0)$ is a $G$-invariant real algebraic variety in $V$ that is usually singular,
and the \emph{real linear symplectic quotient at level $0$} is the space $M_0$ of $G$-orbits in $Z$, i.e., $M_0 = Z/G$, with the quotient topology.
There is a graded Poisson algebra $\R[M_0]$ of \emph{real
regular functions} on $M_0$ given by the quotient $\R[V]^G/I_Z^G$ where $\R[V]^G$ denotes the real $G$-invariant polynomials on $V$,
$I_Z$ is the ideal of functions that vanish on $Z$, and $I_Z^G = I_Z\cap\R[V]^G$.
We refer the reader to \cite{ArmsGotayJennings,SjamaarLerman} for background on singular symplectic quotients.
Important equivalence relations between linear symplectic quotients are given by \emph{graded regular diffeomorphisms} and the stronger
\emph{graded regular symplectomorphisms}. For the present purposes, it is sufficient to note that a graded regular diffeomorphism
(respectively, symplectomorphism) induces an isomorphism between the graded (respectively, graded Poisson) algebras of real regular functions;
see \cite[Section~4]{FarHerSea} or \cite[Section~2]{HerbigSchwarzSeaton} for more details.

Because $\R[M_0]$ is a finitely generated graded algebra, its \emph{Hilbert series} $\Hilb(t)$, the generating function of the dimensions of the vector spaces
$\R[M_0]_d$ of homogeneous polynomials of degree $d$, is a rational function with a pole at $t = 1$. The coefficients of the Laurent expansion of $\Hilb(t)$
at $t = 1$ are denoted $\gamma_i$ so that
\[
    \Hilb(t)    =   \gamma_0(1 - t)^{-D} + \gamma_1(1 - t)^{1 - D} + \gamma_2(1 - t)^{2 - D} + \cdots
\]
Graded regular diffeomorphisms preserve the coefficients $\gamma_i$ so that the $\gamma_i$ can be used to distinguish between
non-graded regularly diffeomorphic symplectic quotients.
Note that it was recently demonstrated that among symplectic quotients by groups $G$ whose identity component is a torus,
the existence of a regular diffeomorphism implies the existence of a graded regular diffeomorphism; see \cite[Corollary~8.8]{HerbigSchwarzSeatonTorus}.

\begin{remark}
\label{rem:ComplexSympQuot}
There is a related notion of a \emph{complex linear symplectic quotient}, a complex algebraic variety associated to the induced representation of
the complexification of $V$. For almost all representations (and for all representations, using a modified definition of the complex symplectic quotient),
the Hilbert series of the algebra of real regular functions coincides with the Hilbert series of the algebra of complex regular functions;
see \cite[Section~2.2]{HerbigSchwarzSeaton2}. Hence, the $\gamma_i$ can as well be used to distinguish between complex symplectic quotients up to graded
isomorphism.
\end{remark}

A formula for the first few coefficients $\gamma_i$ of the Laurent expansion of the Hilbert series was computed in \cite{HerbigSeatonHSeries}
in the case that $G = \Sp^1$ is the circle; let us briefly recall the formula for $\gamma_0$.
An $n$-dimensional unitary representation of $\Sp^1$ is determined by a \emph{weight vector} $A = (a_1,\ldots,a_n)\in\Z^n$.
As zero weights do not change the value of $\gamma_0$, we assume with no loss of generality that each $a_i\neq 0$.
We assume $\gcd(a_1,\ldots,a_n) = 1$, i.e., that the representation is faithful, for otherwise, we can mod out by the kernel
of the action without changing the symplectic quotient.
We further assume that $A$ contains both positive and negative weights, for otherwise, the symplectic quotient is a point,
With these assumptions, the (real) dimension of the corresponding symplectic quotient is $2n - 2$. Let
$\alpha_i = |a_i|$ denote the absolute value of the $i$th weight and $\bs{\alpha} = (\alpha_1,\ldots,\alpha_n)$, and then by
\cite[Theorem~5.1]{HerbigSeatonHSeries},
\begin{equation}
\label{eq:Gam0S1}
    \gamma_0^{\Sp^1}(A)
    =
    \frac{s_{(n-2,n-2,n-3,\ldots,1,0)}(\bs{\alpha})}
        {s_{(n-1,n-2,n-3,\ldots,1,0)}(\bs{\alpha})}.
\end{equation}

Because the formula for $\gamma_0^{\Sp^1}(A)$ in Equation~\eqref{eq:Gam0S1} satisfies the hypotheses of Theorem~\ref{mainthm:PartDerDec}
(and even Theorem~\ref{thrm:PartDerDecOneBox}), we have the following consequence.

\begin{corollary}
\label{cor:S1}
Let $A = (a_1,\ldots,a_n)$ and $B = (b_1,\ldots,b_n)\in \Z^n$ be weight vectors for $n$-dimensional unitary representations of $\Sp^1$
such that $0 < |a_i| \leq |b_i|$ for each $i$ and $|a_i| < |b_i|$ for at least one $i$.
Assume $\gcd(a_1,\ldots,a_n) = \gcd(b_1,\ldots,b_n) = 1$ and $A$ and $B$ contain both positive and negative weights.
Then $\gamma_0^{\Sp^1}(A) > \gamma_0^{\Sp^1}(B)$. In particular, the corresponding
symplectic quotients are not graded regularly diffeomorphic.
\end{corollary}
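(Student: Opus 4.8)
The plan is to reduce the statement to the strict monotonicity established in Theorem~\ref{thrm:PartDerDecOneBox}. First I would observe that the numerator partition $\lambda = (n-2,n-2,n-3,\ldots,1,0)$ and the denominator partition $\rho = (n-1,n-2,n-3,\ldots,1,0)$ appearing in Equation~\eqref{eq:Gam0S1} satisfy $\lambda_1 = \rho_1 - 1$ and $\lambda_i = \rho_i$ for $i\geq 2$; that is, $\lambda$ is obtained from $\rho$ by deleting the single box at the end of the first row. Hence $f(\bs{x}) := s_\lambda(\bs{x})/s_\rho(\bs{x})$ satisfies the hypotheses of Theorem~\ref{thrm:PartDerDecOneBox}, and so for positive values of the variables, $f$ is strictly decreasing in each variable. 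Writing $\bs{\alpha} = (\alpha_1,\ldots,\alpha_n)$ and $\bs{\beta} = (\beta_1,\ldots,\beta_n)$ with $\alpha_i = |a_i|$ and $\beta_i = |b_i|$, the corollary then reduces to the inequality $f(\bs{\alpha}) > f(\bs{\beta})$, where $0 < \alpha_i \leq \beta_i$ for all $i$ and $\alpha_i < \beta_i$ for at least one $i$.

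Next I would interpolate from $\bs{\alpha}$ to $\bs{\beta}$ one coordinate at a time. For $k = 0,1,\ldots,n$, set $\bs{\gamma}^{(k)} = (\beta_1,\ldots,\beta_k,\alpha_{k+1},\ldots,\alpha_n)$, so that $\bs{\gamma}^{(0)} = \bs{\alpha}$ and $\bs{\gamma}^{(n)} = \bs{\beta}$, and note that every coordinate of every $\bs{\gamma}^{(k)}$ is strictly positive since all $\alpha_i, \beta_i > 0$. Consecutive points $\bs{\gamma}^{(k-1)}$ and $\bs{\gamma}^{(k)}$ differ only in the $k$th coordinate, which increases from $\alpha_k$ to $\beta_k$. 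When $\alpha_k = \beta_k$, the two points coincide and $f$ is unchanged; when $\alpha_k < \beta_k$, strict monotonicity in the $k$th variable, applicable because all remaining coordinates are held at positive values, gives $f(\bs{\gamma}^{(k-1)}) > f(\bs{\gamma}^{(k)})$. Since $\alpha_i < \beta_i$ for at least one index, at least one of these steps is a strict decrease while the remaining steps are equalities, so the telescoping chain yields the desired strict inequality $\gamma_0^{\Sp^1}(A) = f(\bs{\alpha}) > f(\bs{\beta}) = \gamma_0^{\Sp^1}(B)$.

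Finally, for the last assertion I would invoke the fact recalled in the text that a graded regular diffeomorphism induces an isomorphism of the graded algebras of real regular functions and hence preserves each Laurent coefficient $\gamma_i$ of the Hilbert series. Both representations are $n$-dimensional, so the corresponding symplectic quotients have the same pole order $D = 2n-2$; since $\gamma_0^{\Sp^1}(A) \neq \gamma_0^{\Sp^1}(B)$, no graded regular diffeomorphism between them can exist.

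The argument is essentially immediate once Theorem~\ref{thrm:PartDerDecOneBox} is available, and the only point requiring care is confirming that every intermediate point $\bs{\gamma}^{(k)}$ of the interpolation has strictly positive coordinates, so that the strict monotonicity statement genuinely applies at each step. This is exactly where the standing hypothesis that no weight $a_i$ or $b_i$ vanishes is used; I expect this to be the sole subtlety, the rest of the proof being formal.
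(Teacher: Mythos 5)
Your proof is correct and follows essentially the same route as the paper, which simply observes that the partitions $(n-2,n-2,n-3,\ldots,1,0)\subsetneq(n-1,n-2,n-3,\ldots,1,0)$ in Equation~\eqref{eq:Gam0S1} satisfy the hypotheses of Theorem~\ref{mainthm:PartDerDec} (indeed of Theorem~\ref{thrm:PartDerDecOneBox}) and that graded regular diffeomorphisms preserve the $\gamma_i$. Your coordinate-by-coordinate interpolation just makes explicit the standard step, left implicit in the paper, of passing from strict monotonicity in each variable to the componentwise comparison $\gamma_0^{\Sp^1}(A) > \gamma_0^{\Sp^1}(B)$.
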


It follows that many examples of symplectic quotients associated to circles are not graded regularly symplectomorphic. This fact was already
established more generally by \cite[Theorem~H]{HerbigSchwarzSeatonTorus}; however, Corollary~\ref{cor:S1} indicates that the invariant $\gamma_0^{\Sp^1}(A)$
is sufficiently fine to distinguish between many examples.

We also have the following bound for $\gamma_0^{\Sp^1}(A)$ in terms of the largest two weights.

\begin{proposition}
\label{prop:S1Bound}
Let $A = (a_1,\ldots,a_n)\in\Z^n$ be the weight vector of an $n$-dimensional unitary representation of $\Sp^1$.
Assume that each $a_i\neq 0$, $\gcd(a_1,\ldots,a_n) = 1$, and $A$ contains both positive and negative weights.
Let $i_0, i_1\in\{1,2,\ldots,n\}$ with $i_0\neq i_1$ such that $\alpha_{i_0}$ and $\alpha_{i_1}$ are the largest two
$\alpha_i$ in $\bs{\alpha}$ (which may be equal). Then
\begin{equation}
\label{eq:S1Bound}
    \gamma_0^{\Sp^1}(A) \leq    \frac{1}{\alpha_{i_0} + \alpha_{i_1}}.
\end{equation}
\end{proposition}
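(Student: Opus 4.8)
The plan is to prove the equivalent polynomial inequality
\[
    (\alpha_{i_0}+\alpha_{i_1})\,s_{\lambda}(\bs{\alpha}) \leq s_{\rho}(\bs{\alpha}),
    \qquad
    \rho = (n-1,n-2,\ldots,1,0),\quad \lambda = (n-2,n-2,n-3,\ldots,1,0),
\]
for the two partitions appearing in \eqref{eq:Gam0S1}. Since $s_\rho(\bs\alpha)>0$ for positive $\bs\alpha$, dividing by $(\alpha_{i_0}+\alpha_{i_1})\,s_\rho(\bs\alpha)$ then yields \eqref{eq:S1Bound} via $\gamma_0^{\Sp^1}(A) = s_\lambda(\bs\alpha)/s_\rho(\bs\alpha)$. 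Because $s_\lambda$ and $s_\rho$ are symmetric, I would first relabel the variables so that the two largest absolute weights occupy the last two positions, i.e.\ assume $i_0 = n$ and $i_1 = n-1$ with $\alpha_{n-1},\alpha_n$ the two largest entries of $\bs\alpha$. The target becomes $(\alpha_{n-1}+\alpha_n)\,s_\lambda(\bs\alpha)\leq s_\rho(\bs\alpha)$, and notably this needs neither the strict monotonicity of Theorem~\ref{mainthm:PartDerDec} nor the product formula $s_\rho(\bs\alpha)=\prod_{i<j}(\alpha_i+\alpha_j)$.

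The structural fact I would exploit is that $\rho$ is obtained from $\lambda$ by adjoining a single box at the end of the first row, in position $(1,n-1)$, and that this box is an outer corner with no box beneath it, since $\rho_2 = n-2 < n-1$. In the spirit of the monomial-preserving maps used earlier in the paper, for $c\in\{n-1,n\}$ I would define a map $\Phi_c$ sending an SSYT $T$ of shape $\lambda$ with entries at most $n$ to the filling of $\rho$ obtained by placing the label $c$ in position $(1,n-1)$ and leaving all other entries unchanged, so that $\bs\alpha^{\Phi_c(T)} = \alpha_c\,\bs\alpha^T$. To see that $\Phi_c(T)$ is a genuine SSYT of $\rho$, there is no column condition to verify, because $(2,n-1)\notin\rho$; only the row condition $c\geq T_{1,n-2}$ must hold.

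The crux of the argument, and the step I expect to be the main obstacle, is verifying this row condition for $c = n-1$ (the case $c = n$ is automatic, as $n$ is the maximal label). Here I would use the specific shape of $\lambda$: since $\lambda_1 = \lambda_2 = n-2$ while $\lambda_3 = n-3 < n-2$, the column containing the appended box, namely column $n-2$, has height exactly $2$ in $\lambda$. Hence both cells $(1,n-2)$ and $(2,n-2)$ lie in $T$, and the strict column condition $T_{1,n-2} < T_{2,n-2} \leq n$ forces $T_{1,n-2}\leq n-1$, so that $\Phi_{n-1}$ is well defined. (For $n=2$ the first row of $\lambda$ is empty and the condition is vacuous.)

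Finally, both $\Phi_{n-1}$ and $\Phi_n$ are injective, since $T$ is recovered from $\Phi_c(T)$ by deleting the corner box, and their images are disjoint because the entry in position $(1,n-1)$ records $c$. Thus the two images are disjoint subsets of the set of SSYT of shape $\rho$, and summing the (positive) monomials gives
\[
    (\alpha_{n-1}+\alpha_n)\,s_\lambda(\bs\alpha)
    = \sum_{T}\bs\alpha^{\Phi_{n-1}(T)} + \sum_{T}\bs\alpha^{\Phi_n(T)}
    \leq \sum_{U}\bs\alpha^{U}
    = s_\rho(\bs\alpha),
\]
where the first two sums range over SSYT $T$ of shape $\lambda$ and the last over SSYT $U$ of shape $\rho$. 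Combining this with $\gamma_0^{\Sp^1}(A) = s_\lambda(\bs\alpha)/s_\rho(\bs\alpha)$ and the relabeling $\alpha_{i_0}+\alpha_{i_1} = \alpha_{n-1}+\alpha_n$ completes the proof of \eqref{eq:S1Bound}.
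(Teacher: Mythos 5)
Your proposal is correct and is essentially the paper's own argument: both proofs append the corner box $(1,n-1)$ to $\lambda$ with label $n-1$ or $n$, justify the label $n-1$ via the strict increase in the height-two column $n-2$ (forcing $T_{1,n-2}\leq n-1$), conclude $(x_{n-1}+x_n)\,s_\lambda(\bs{x})\leq s_\rho(\bs{x})$ by injectivity/disjointness of the two insertions, and then invoke the symmetry of Schur polynomials to replace $x_{n-1}+x_n$ by the two largest weights. The only difference is organizational (you relabel variables up front, while the paper divides first and then appeals to symmetry), so no further comparison is needed.
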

\begin{proof}
Let $\lambda = (n-2,n-2,n-3,\ldots,1,0)$ and $\delta = (n-1,n-2,n-3,\ldots,1,0)(\bs{\alpha})$.
If $S$ is a SSYT of shape $\lambda$ with labels in $\{1,2,\ldots,n\}$, then as the last column must be increasing, the largest possible value of
$S_{1,n-2}$ is $n-1$. Therefore, there are at least two SSYT's of shape $\delta$ that result in $S$ when removing the box in position $(1,n-1)$;
the first has label $n-1$ at position $(1,n-1)$ while the second has label $n$ in this position. It follows that if $\bs{x} = (x_1,\ldots,x_n)$
and all $x_i\geq 0$, we have
\[
    (x_n + x_{n-1})s_\lambda(\bs{x})
    \leq
    s_\delta(\bs{x}).
\]
Then
\[
    \frac{s_\lambda(\bs{x})}{s_\delta(\bs{x})}
    \leq
    \frac{1}{x_n + x_{n-1}}
\]
so that as $s_\lambda(\bs{x})$ and $s_\delta(\bs{x})$ are symmetric,
\[
    \frac{s_\lambda(\bs{x})}{s_\delta(\bs{x})}
    \leq
    \frac{1}{x_i + x_j}
\]
for each $i\neq j$. Substituting $\bs{\alpha}$ for $\bs{x}$ completes the proof.
\end{proof}

Proposition~\ref{prop:S1Bound} can be used, for instance, to distinguish between symplectic quotients by $\Sp^1$ with those by other groups.
For instance, in the case that $G = \SU_2$ is the special unitary $2\times 2$ group, there are finitely many unitary representations of a given
dimension, and hence finitely many $\SU_2$-symplectic quotients of a given dimension. We therefore have the following.

\begin{corollary}
\label{cor:Sp1SU2}
For any dimension $m$, all but finitely many representations of $\Sp^1$ with symplectic quotients of dimension $m$
have values of $\gamma_0$ that do not coincide with that of any symplectic quotient by $\SU_2$ of dimension $m$. Hence, for any fixed dimension
$m$, all but finitely many symplectic quotients by $\Sp^1$ are not graded regularly diffeomorphic to a symplectic quotient by
$\SU_2$.

In particular, let $\gamma_0^{\SU_2,\operatorname{min}}(m)$ denote the minimum value of $\gamma_0$ among the finitely many representations
of $\SU_2$ with symplectic quotients of dimension $m$. If the symplectic quotient corresponding to the $\Sp^1$-representation with
weight vector $A = (a_1,\ldots,a_n)$ is graded regularly diffeomorphic to a symplectic quotient by $\SU_2$, then the largest two elements
$\alpha_{i_0}$ and $\alpha_{i_1}$ of $\bs{\alpha}$ satisfy $\alpha_{i_0} + \alpha_{i_1} \leq 1/\gamma_0^{\SU_2,\operatorname{min}}(m)$.
\end{corollary}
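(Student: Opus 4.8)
The plan is to combine the weight bound of Proposition~\ref{prop:S1Bound} with the finiteness of $\SU_2$-representations of a fixed dimension. First I would record that fixing the dimension $m$ of the symplectic quotient fixes $n$: under the standing normalizations on the weight vector (nonzero weights, $\gcd(a_1,\ldots,a_n) = 1$, and both positive and negative weights present), an $\Sp^1$-quotient with $n$ weights has dimension $2n-2$, so $n = m/2 + 1$ is determined. I would then invoke the fact that $\SU_2$ has only finitely many representations of any given dimension (its irreducibles are indexed by dimension, and a fixed total dimension admits only finitely many decompositions into irreducibles), so there are finitely many $\SU_2$-symplectic quotients of dimension $m$ and hence finitely many associated values of $\gamma_0$. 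Since each such $\gamma_0$ is the leading coefficient of the Laurent expansion of a Hilbert series and is therefore strictly positive, the minimum $\gamma_0^{\SU_2,\operatorname{min}}(m)$ over this finite set is a well-defined positive number.

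Next I would establish the ``in particular'' clause, which drives the rest. Suppose the $\Sp^1$-quotient with weight vector $A$ is graded regularly diffeomorphic to an $\SU_2$-quotient of dimension $m$. Because a graded regular diffeomorphism induces an isomorphism of graded algebras, it preserves both the pole order $D$ (hence the dimension) and every Laurent coefficient $\gamma_i$; in particular $\gamma_0^{\Sp^1}(A)$ equals the $\gamma_0$ of that $\SU_2$-quotient and is therefore at least $\gamma_0^{\SU_2,\operatorname{min}}(m)$. Combining this lower bound with the upper bound of Proposition~\ref{prop:S1Bound} gives
\[
    \gamma_0^{\SU_2,\operatorname{min}}(m) \leq \gamma_0^{\Sp^1}(A) \leq \frac{1}{\alpha_{i_0}+\alpha_{i_1}},
\]
and rearranging yields the stated inequality $\alpha_{i_0}+\alpha_{i_1} \leq 1/\gamma_0^{\SU_2,\operatorname{min}}(m)$.

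Finally I would deduce the two finiteness assertions from this bound. Since $\alpha_{i_0}$ is the largest absolute weight, every $\alpha_i$ satisfies $\alpha_i \leq \alpha_{i_0} \leq \alpha_{i_0}+\alpha_{i_1} \leq 1/\gamma_0^{\SU_2,\operatorname{min}}(m)$, so all weights are bounded by a constant depending only on $m$. With $n$ fixed and each $\alpha_i$ a positive integer in a bounded range, only finitely many weight vectors $A$ (including the finite set of sign choices) can satisfy the bound; every other $\Sp^1$-quotient of dimension $m$ therefore has $\gamma_0^{\Sp^1}(A)$ distinct from the $\gamma_0$ of each $\SU_2$-quotient of dimension $m$, which is the first assertion. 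Since a graded regular diffeomorphism forces equality of $\gamma_0$ and preserves the dimension, these remaining infinitely many $\Sp^1$-quotients cannot be graded regularly diffeomorphic to any $\SU_2$-quotient, giving the second assertion.

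This argument is essentially a repackaging of Proposition~\ref{prop:S1Bound}, so I do not expect a serious obstacle. The one point that needs genuine care is confirming that $\gamma_0^{\SU_2,\operatorname{min}}(m)$ is \emph{strictly} positive, so that $1/\gamma_0^{\SU_2,\operatorname{min}}(m)$ is finite and truly bounds the weights; this rests on positivity of the leading Laurent coefficient of a Hilbert series, which holds for these quotients. I would also be careful to invoke the standing normalizations on $A$ throughout, so that both the dimension count $m = 2n-2$ and the hypotheses of Proposition~\ref{prop:S1Bound} are legitimately in force.
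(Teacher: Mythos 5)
Your proposal is correct and is essentially the paper's own argument: the paper derives this corollary immediately from Proposition~\ref{prop:S1Bound}, the finiteness of $\SU_2$-representations (hence $\SU_2$-quotients and their $\gamma_0$ values) in each fixed dimension, and the invariance of the Laurent coefficients $\gamma_i$ under graded regular diffeomorphism. Your added care about the strict positivity of $\gamma_0^{\SU_2,\operatorname{min}}(m)$ and the fixed value $n = m/2+1$ simply makes explicit what the paper leaves implicit.
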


It follows that for any symplectic quotient dimension $m$, there are finitely many weight vectors $A$ for which it is possible that
$\gamma_0^{\Sp^1}(A)$ coincides with $\gamma_0$ for an $\SU_2$-symplectic quotient of the same dimension.

Let us recall the computation of $\gamma_0$ in the case $G = \SU_2$ from \cite{HerbigHerdenSeatonSU2}.
Let $V_d$ denote the unique unitary irreducible representation of $\SU_2$ of complex dimension $d+1$ on binary forms of degree $d$;
the weights of $V_d$ are $-d, -d+2, \ldots, d-2, d$.
Then a unitary representation $V$ of $\SU_2$ such that $V^{\SU_2} = \{0\}$ is of the form $V = \bigoplus_{k=1}^r V_{d_k}$ for some $d_k \geq 1$,
and the weights of $V$ are given by combining
the weights for each $V_{d_k}$ accounting for multiplicities. Fix such a representation $V$ and let $\bs{d} = (d_1,\ldots,d_r)$.
Let $\sigma_V = 2$ if each $d_k$ is even and $1$ otherwise, and let $\bs{a}$ denote the collection of \emph{positive} weights of the representation
$V\oplus V^\ast\simeq V\oplus V$, i.e., the positive weights of $V$ with each multiplicity doubled. Let $D = \dim_\C V = r + \sum_{k=1}^r d_k$ denote the
complex dimension of $V$, let $e$ denote the number of even $d_k$, and let $C = (D - e)/2$ denote the number of positive weights in $V$
so that $\bs{a} = (a_1,a_2,\ldots,a_{2C})$. Let $\widehat{\delta} = (2C - 1, 2C - 2, \ldots, 1, 0)$,
$\widehat{\rho} = (2C - 3, 2C - 3, 2C - 3, 2C - 4,\ldots, 1, 0)$, and
$\widehat{\rho}^\prime = (2C - 3, 2C - 4, 2C - 4, 2C - 4,2C - 5,\ldots, 1, 0)$.
Then by \cite[Theorem~4.2]{HerbigHerdenSeatonSU2}, assuming $V$ is not isomorphic to $V_d$ for $d = 1,2,3,4$ nor $2V_1$, the symplectic quotient
has real dimension $2D - 6 = 4C + 2e - 6$, and $\gamma_0$ is given by
\begin{equation}
\label{eq:Gam0SU2}
    \gamma_0^{\SU_2}(\bs{d})
    =
    \frac{8\sigma_V \big(s_{\widehat{\rho}}(\bs{a})
        + s_{\widehat{\rho}^\prime}(\bs{a})\big)}
        {s_{\widehat{\delta}}(\bs{a})}.
\end{equation}

This expression can be used to find a lower bound for $\gamma_0^{\SU_2}(\bs{d})$ among representations $V$ of dimension $D$, i.e., symplectic quotients
of dimension $2D-6$, as follows.
Given a SSYT $S$ of shape $\widehat{\rho}$, the possible SSYT's $T$ of shape $\widehat{\delta}$ that yield $S$ when the boxes in positions
$(1,2C-2)$, $(1,2C-1)$ and $(2,2C-2)$ are removed provide monomials of the form $\bs{x}^T = \bs{x}^S x_i x_j x_k$ where $j\geq i\geq 1$ and $k > i$. Hence,
\[
    s_{\widehat{\rho}}(\bs{a})\sum\limits_{\substack{1\leq i\leq j \leq 2C \\ i < k \leq 2C}} a_i a_j a_k
    \geq
    s_{\widehat{\delta}}(\bs{a}).
\]
Therefore,
\[
    \gamma_0^{\SU_2}(\bs{d})
    >
    \frac{8\sigma_V s_{\widehat{\rho}}(\bs{a})}
        {s_{\widehat{\delta}}(\bs{a})}
    \geq
    \frac{8\sigma_V}{\sum\limits_{\substack{1\leq i\leq j \leq 2C \\ i < k \leq 2C}} a_i a_j a_k}.
\]
By induction on $C$, one can show that the sum in the denominator contains $2(4C^3-C)/3$ terms,
and as $2C\leq D$, we have $2(4C^3-C)/3 \leq (D^3 - D)/3$.
Each $d_k \leq D-1$, and each $a_i, a_j$, and $a_k$ is bounded by the maximum value of $d_k$. Therefore, $a_j, a_k \leq D-1$ so that
$a_i \leq D-2$. Hence,
\[
    \gamma_0^{\SU_2}(\bs{d})
    >
    \frac{8\sigma_V}{\frac{1}{3}(D^3-D) (D-2)(D-1)^2}
    =
    \frac{24\sigma_V}{D(D-2)(D+1)(D-1)^3}.
\]

Let us describe an application of Corollary~\ref{cor:Sp1SU2}.
The $\gamma_0^{\SU_2}(\bs{d})$ have been computed for each
linear $\SU_2$-symplectic quotient of dimension at most $38$ in \cite{HerbigHerdenSeatonSU2} using Equation~\eqref{eq:Gam0SU2}
as well as direct computations of the Hilbert series for the low-dimensional exceptions.
We have compared the values of $\gamma_0^{\Sp^1}(A)$ for each $A$ satisfying the conclusion
$\alpha_{i_0} + \alpha_{i_1} \leq 1/\gamma_0^{\SU_2,\operatorname{min}}(m)$
of Corollary~\ref{cor:Sp1SU2}
to the finite list of values of $\gamma_0^{\SU_2}(\bs{d})$ for symplectic quotients of dimension at most 8.
Recall that the real dimension of an $\Sp^1$-symplectic quotient is $2n - 2$, and the real dimension of an $\SU_2$-symplectic quotient,
excluding some low-dimensional exceptions, is $2D - 6$.
This comparison is vastly simplified using Theorem~\ref{mainthm:PartDerDec}; once a value of $\gamma_0^{\Sp^1}(A)$ is found that is smaller
than $\gamma_0^{\SU_2,\operatorname{min}}(\bs{d})$ of a given symplectic quotient dimension, no other weight vectors with all weights
larger than or equal to those of $A$ must be checked.
The only cases where $\gamma_0^{\Sp^1}(A) = \gamma_0^{\SU_2}(\bs{d})$ for symplectic quotients of the same dimension are as follows.
\begin{itemize}
\item   $\gamma_0^{\Sp^1}(1,1) = \gamma_0^{\SU_2}(1,1) = \gamma_0^{\SU_2}(2) = 1/2$. These $2$-dimensional symplectic quotients are graded
        regularly symplectomorphic, as they are all graded regularly symplectomorphic to the orbifold $\C/\pm 1$;
        see \cite[Lemma~2.7 and Theorem~2.9]{ArmsGotayJennings}, \cite[Theorem~7]{FarHerSea}, and \cite[Remark~5.4]{HerbigSchwarzSeaton}.
\item   $\gamma_0^{\Sp^1}(1,3) = \gamma_0^{\SU_2}(3) = 1/4$. These $2$-dimensional symplectic quotients are also graded regularly symplectomorphic
        to an orbifold, $\C/\langle i \rangle$; see \cite[Theorem~7]{FarHerSea} and \cite[Proposition~5.5]{HerbigSchwarzSeaton}.
\item   $\gamma_0^{\Sp^1}(1,2,2) = \gamma_0^{\SU_2}(1,2) = 2/9$. The graded algebras of real regular functions on these two symplectic quotients,
        both of dimension $4$, have the same Hilbert series. The Hilbert series for the $\SU_2$-quotient was computed in
        \cite[Table~A.1]{HerbigHerdenSeatonSU2} and is given by
        \[
            \frac{1 + 2t^2 + 2t^3 + 2t^4 + t^6}{(1 - t^2)^2(1 - t^3)^2}.
        \]
        This coincides with the Hilbert series of the $\Sp^1$-quotient, which can be computed using \cite[Theorem~3.1]{HerbigSeatonHSeries}
        and is also easy to compute using an explicit description of the real regular functions.
\item   $\gamma_0^{\Sp^1}(1,1,1,1) = \gamma_0^{\SU_2}(1,1,1) = 5/16$. The Hilbert series of the graded algebras of real regular functions on these
        two symplectic quotients, both of dimension $6$, coincide as well; they were also computed in \cite[Table~A.1]{HerbigHerdenSeatonSU2}
        and \cite[Section~5.3]{HerbigSeatonHSeries} and are given by
        \[
            \frac{1 + 9t^2 + 9t^4 + t^6}{(1 - t^2)^6}.
        \]
\end{itemize}

There are no cases of $8$-dimensional symplectic quotients such that $\gamma_0^{\Sp^1}(A) = \gamma_0^{\SU_2}(\bs{d})$, and though
checking in dimension $10$ is in progress, no cases such that $\gamma_0^{\Sp^1}(A) = \gamma_0^{\SU_2}(\bs{d})$ have been identified.

This experimental investigation of the $\gamma_0$ using Corollary~\ref{cor:Sp1SU2} continues and will be used to identify other
potential graded regular diffeomorphisms or symplectomorphisms between symplectic quotients by $\Sp^1$ and $\SU_2$, if they exist.
The question of whether the examples in the third and fourth items of the above list are indeed graded regularly diffeomorphic or
symplectomorphic will be pursued in a more relevant context.


\bibliographystyle{amsplain}
\providecommand{\bysame}{\leavevmode\hbox to3em{\hrulefill}\thinspace}
\providecommand{\MR}{\relax\ifhmode\unskip\space\fi MR }
\providecommand{\MRhref}[2]{%
  \href{http://www.ams.org/mathscinet-getitem?mr=#1}{#2}
}
\providecommand{\href}[2]{#2}

\end{document}